\newcommand{\ack}{\section*{Acknowledgments}}
\theoremstyle{plain}
\newtheorem{theorem}{Theorem}[section]
\newtheorem{corollary}[theorem]{Corollary}
\newtheorem{lemma}[theorem]{Lemma}
\newtheorem{definition}{Definition}[section]
\newtheorem{remark}{Remark}[section]
\newtheorem{example}{Example}[section]
\newtheorem{proposition}[theorem]{Proposition}
\numberwithin{equation}{section}
\DeclareMathOperator{\sgn}{sgn} \DeclareMathOperator{\dom}{dom}
\DeclareMathOperator{\Span}{span}
\DeclareMathOperator{\loc}{loc}
\DeclareMathOperator{\tr}{tr}\DeclareMathOperator*{\esssup}{ess\,sup}
\newcommand{\ti}{\tilde}
\newcommand\R{{\mathbb{R}}}
\newcommand\CC{{\mathbb{C}}}
\newcommand\Z{{\mathbb{Z}}}
\newcommand\gH{{\mathfrak{H}}}
\newcommand\gt{{\mathfrak{t}}}
\newcommand\cM{{\mathcal{M}}}
\newcommand\cK{{\mathcal{K}}}
\newcommand\cG{{\mathcal{G}}}
\newcommand\cW{{\mathcal{W}}}
\newcommand\rD{{\rm{d}}}
\newcommand\I{{\rm{i}}}
\title{The similarity problem for indefinite Sturm--Liouville  operators with periodic coefficients}
\author{Aleksey Kostenko}
\date{}
\begin{document}

\maketitle

\begin{abstract}
We investigate the problem of similarity to a self-adjoint operator for $J$-positive Sturm--Liouville operators $L=\frac{1}{\omega}\left(-\frac{d^2}{dx^2}+q\right)$ with $2\pi$-periodic coefficients $q$ and $\omega$. It is shown that if $0$ is a critical point of the operator $L$, then it is a singular critical point. This gives us a new class of $J$-positive differential operators with the singular critical point $0$. Also, we extend the Beals and Parfenov regularity conditions for the critical point $\infty$ to the case of operators with periodic coefficients.
\end{abstract}

\quad

\noindent {\bf Keywords: } $J$-self-adjoint operator,
Sturm--Liouville operator, similarity, critical points

\quad

 \noindent {\bf Subject classification: } 47E05, 34B24,
34B09,
34L10, 47B50\\

\quad

\section{ Introduction}\label{intro}

Consider the Sturm--Liouville spectral problem
\begin{equation}\label{I_01}
-y''(x)+q(x)y(x)=z\ \omega(x)y(x),\qquad x\in \R,
\end{equation}
with real $2\pi$-periodic coefficients $q(x)=q(x+2\pi)$, $\omega(x)=\omega(x+2\pi)$, and such that $q=\overline{q}\in L^1(0,2\pi)$, $\omega=\overline{\omega}\in L^1(0,2\pi)$, and $|\omega|>0$ a.e. on $\R$. If the weight function $\omega$ does not change sign on $\R$, then \eqref{I_01} leads to a self-adjoint operator in the Hilbert space $L^2(\R,|\omega|)$,
\begin{equation}\label{I_02}
L:=\frac{1}{\omega}\left(-\frac{\rD^2}{\rD x^2}+q\right),\quad \dom(L):=\{f\in L^2(\R,\omega): \ f, f'\in AC_{\loc}(\R),\ Lf\in L^2(\R,|\omega|) \}.
\end{equation}
Spectral properties of $L$ are widely studied in this case (see \cite{RS_IV, KhoRof, Tit58, Weid87} and references therein). In particular, the spectrum $\sigma(L)$ consists of a countable set of closed intervals on the real axis, which may degenerate into finitely many closed intervals and a half-line. The spectral expansion was constructed by Gelfand \cite{Gel50} and Titchmarsh \cite{Tit58}.

Much less is known if the weight function changes sign \cite{LanDah86, Zet05}.
In this case, the operator $L$ associated with \eqref{I_01} is no longer self-adjoint or even symmetric in  $L^2(\R,|\omega|)$. Moreover, its spectrum is not necessarily real. However, the differential operator \eqref{I_02} is symmetric with respect to the indefinite inner product, so it is natural to consider it in the Krein space $L^2(\R,\omega)$ (for definitions see \cite{Lan82}).

Sturm--Liouville differential operators and higher order ordinary differential operators with indefinite
weight functions have attracted a lot of attention in the recent past \cite{BTrunk07, BC04, BF11, CurLan, CN95, LanDah86, Fle96, Fle_07, Fle_08, Kar_09,KarKos08, KarKos09, KAM_09, KM06, Par03, Pyat89, Pyat05, Vol96, Zet05}.
The main problem we are concerned with is
\emph{the similarity} of the operator $L$
to a self-adjoint operator. The similarity of the operator $L$ to a
self-adjoint one is essential for the theory of forward-backward
parabolic equations arising in certain physical models (see \cite{Beals85, Kar_07} and references therein).

Recall that closed operators $T_1$
and $T_2$ in a Hilbert space $\mathfrak{H}$ are called \emph{similar} if
there exists a bounded operator $S$ with bounded inverse
$S^{-1}$ in $\mathfrak{H}$ such that $S\dom(T_1) = \dom(T_2)$ and $T_2 = S
T_1 S^{-1}$.

In the present paper, we restrict ourselves to the case when the operator $L$ is $J$-positive, i.e., $J:f(x)\to (\sgn \omega(x)) f(x)$ and throughout the paper we assume that the operator
\begin{equation}\label{I_03}
A:=JL=\frac{1}{|\omega|}\left(-\frac{\rD^2}{\rD x^2}+q\right),\qquad \dom(A):=\dom(L),
\end{equation}
is positive in $L^2(\R,|\omega|)$\footnote{We say that a closed operator $T$ in a Hilbert space $\mathfrak{H}$ is positive, $T>0$, (nonnegative, $T\ge 0$) if $(Tf,f)_{\mathfrak{H}}>0$ \ ($(Tf,f)_{\mathfrak{H}}\ge 0$) for every $f\in\dom(T)\setminus\{0\}$}. Note that (see \cite{Weid87}) the condition $\int_0^{2\pi} q(x)dx>0$ is necessary for the operator $A$ to be positive if $q\neq 0$. Let us also stress that $\ker A=\ker L=\{0\}$ even if $0\in \sigma(L)$ since the spectrum of $L$ is purely continuous (cf. Theorem \ref{th_III.1}).

For regular $J$-nonnegative differential operators (or, more generally, for operators with discrete spectra) the similarity problem is equivalent to the Riesz basis property of eigenfunctions. The Riesz basis property  has been studied in \cite{Beals85,  BC04, CurLan, Fle96, Fle_07, Fle_08, Par03, Pyat89, Pyat05, Vol96} (see also the paper by P. Binding and A. Fleige \cite{BF11} in this volume  for a review) and quite general sufficient conditions have been found, the so-called Beals' conditions. It turns out that the Riesz basis property essentially depends on local behavior of $\omega$ in a neighborhood of turning points. Moreover, in \cite{Par03} (see also \cite{BF11}) it was shown that these conditions are close to be necessary
(for further details see Section \ref{sec_II} below). As was pointed out in \cite{CurLan} the analysis extends  to the case when the $J$-nonnegative operator $L$ has a bounded inverse, i.e., $0\in\rho(L)$. In this case, the similarity problem is equivalent to the problem of regularity of the critical point $\infty$ of the $J$-positive operator $L$.

If $0\in\sigma(L)$, then $0$ may be a critical point of $L$ and the situation is more complicated in this case. Several
abstract similarity criteria
can be found in \cite{Cur_85,MMMCr,NabCr,Ves72}, however, it is not easy to apply them to operators
of the form \eqref{I_02}.
The first result of this type
was obtained by B.~\'Curgus and B.~Najman \cite{CN95}. Using the \'Curgus regularity criterion \cite[Theorem 3.2]{Cur_85}, they proved that  the operator $(\sgn x) \frac{d^2}{d x^2}$ is similar to a self-adjoint operator in $L^2(\R)$. This result has substantially been extended in \cite{KAM_09, KM06} (see also references therein). Namely, using the resolvent criterion of
similarity (see \cite{MMMCr,NabCr}), the regularity of the critical point $0$ was established for several classes of indefinite Sturm--Liouville operators. Also, in \cite{KarKos08, KarKos09} (see also \cite[Section 5]{KAM_09} and \cite[Section 5]{Kar_09}) it was shown that $0$ can be a singular critical point and examples of such indefinite Sturm--Liouville operators have been given.

Let us emphasize that all the above mentioned conditions have been obtained under the additional assumption that $\omega$ has only a finite number of turning points, i.e., points where $\omega$ changes sign (there is an example given by Pyatkov \cite{Pyat89} with infinitely many turning points and no Riesz basis of eigenfunctions, see also Remark \ref{rem_inf_c_p} below). So, the operator $L$ with periodic coefficients and an indefinite weight is the simplest and, in some sense, natural model of operators with an infinite number of turning points.


This paper is also motivated by the following problem posed by B. \'Curgus during his talk at the $6^{th}$ Workshop on Operator Theory in Krein Spaces. He suggested \cite{Cur_06} that the operator defined in $L^2(\R)$ by the differential expression
\begin{equation}\label{I_04}
-(\sgn \sin x)\frac{\rD^2}{\rD x^2}
\end{equation}
\emph{is a good candidate to be similar to a self-adjoint one}. In the present paper we give the answer to this problem (see Example \ref{ex_I}). Namely, we show that {\em this operator is a bad candidate}, that is, {\em the operator \eqref{I_04} is not similar to a self-adjoint one}. The reason is the following. We show that if $0$ is a critical point for the $J$-positive Sturm--Liouville operator $L$ with periodic coefficients, then it is a singular critical point (see Theorem \ref{th_3.3}). In particular, in the case $q\equiv 0$, $0$ is a critical point for $L$ precisely when $\int_0^{2\pi}\omega(x)dx=0$ (see \cite[Theorem 5.1]{LanDah86} and also Corollary \ref{cor:3.7}). The latter immediately yields that in the case $q\equiv 0$, $0$ is a singular critical point for $L$ if and only if $\int_0^{2\pi}\omega(x)dx=0$ (Corollary \ref{c5.3}). Thus the conjecture of B. \'Curgus unexpectedly leads to a new class of differential operators with the singular critical point $0$.

Also we extend the Beals and Parfenov regularity conditions for the critical point $\infty$ to the case of operators with periodic coefficients under the additional assumption that $\omega$ has only a finite number of critical points on $[0,2\pi)$ (Theorem \ref{th_inftycond}). Combining these results we obtain necessary and sufficient conditions for the operator $L$ to be similar to a self-adjoint one (Theorem \ref{th_3.4}). Our approach is based on the Gelfand decomposition of operators with periodic coefficients \cite{Gel50, RS_IV} (see also Subsection \ref{ss_III.1}). This representation plays a key role in the study of Hill operators $H=-d^2/dx^2+q(x)$ with complex potentials $q\neq \overline{q}$. In \cite{McG_65}, McGarvey noticed that for the spectral projections of $H$ to be bounded at $\infty$ it is necessary, roughly speaking, that the associated family $H(t)$ (cf. \eqref{3.36}) has a uniform Riesz basis property.
Veliev (see \cite{Vel_83}) was the first to note that the finite point $\lambda_0\in\CC$ is the spectral singularity of the Hill operator $H$ if and only if the root subspace of $H(t)$ corresponding to $\lambda_0$ contains a root function which is not an actual eigenfunction of $H(t)$. In the recent paper \cite{GT_09}, a criterion for the similarity to a normal operator was obtained for Hill operators with complex locally square integrable potentials.

To conclude, we briefly describe the content of the paper. Section \ref{sec_II} contains necessary information on conditions for the Riesz basis property for indefinite Sturm--Liouville problems on a finite interval. In Subsection \ref{ss_III.1}, we collect basic facts on the Floquet theory of second order differential operators with periodic coefficients. Subsection \ref{ss_III.2} contains the main results of the paper, necessary and sufficient regularity conditions of critical points $0$ and $\infty$ for the operator $L$, as well as necessary and sufficient conditions for the operator $L$ to be similar to a self-adjoint one. In the final Subsection \ref{ss_exmpl} we illustrate the results by considering two simple examples. In particular, we discuss the conjecture of B. \'Curgus.

\textbf{Notation.} Throughout this paper we use the symbol
$\prime$ to denote $x$-derivatives and $\bullet$ to denote
$z$-derivatives. Moreover, we  denote by $\sigma(\cdot)$
 and $\rho(\cdot)$ the spectrum and the
resolvent set of a densely defined, closed, linear operator in a Hilbert
space. 

\section{Preliminaries} \label{sec_II}
\subsection{Sturm--Liouville operators on a finite interval}\label{ss_II.1}

We begin with the Sturm--Liouville differential expression on a finite interval
\begin{equation}\label{II_1_01}
\ell[y]:=\frac{1}{\omega(x)}\left(-y''+q(x)y\right),\qquad x\in [0,2\pi].
\end{equation}
It is assumed that $q=\overline{q}\in L^1[0,2\pi]$, $\omega=\overline{\omega}\in L^1[0,2\pi]$, and $|\omega|>0$ a.e. on $[0,2\pi]$.
Let us associate with \eqref{II_1_01}  the operators $L_D$ and $L(t)$ defined in $L^2([0,2\pi];|\omega|)$ on the domains 
\begin{equation}\label{II_1_02}
 \dom(L_D)=\{f\in \dom(L_{\max}) :\ f(0)=f(2\pi)=0\},
\end{equation}
and, for $t\in [0,2\pi)$,
\begin{equation}\label{II_1_03}
 \dom\big(L(t)\big)=\{f\in \dom(L_{\max}) :\ f(0)=e^{it}f(2\pi),\ f'(0)=e^{it}f'(2\pi)\},
\end{equation}
respectively. Here
\[
\dom(L_{\max})=\{f\in L^2([0,2\pi];|\omega|):\ f, f'\in AC[0,2\pi],\ \ell[f]\in L^2([0,2\pi];|\omega|)\}.
\]
If the weight function $\omega$ is positive on $[0,2\pi]$, then $L_D$ and $L(t)$ are self-adjoint operators  in the Hilbert space $L^2([0,2\pi]; |\omega|)$. Moreover, the operators $L_D$ and $L(t)$ are lower semibounded  and their spectra are discrete (cf. \cite{Weid87, Zet05}). The form domains $\gt_D$ and $\gt(t)$ of $L_D$ and $L(t)$, respectively, are given by
\begin{equation}\label{II_1_02B}
 \dom(\gt_D)=\Big\{f\in L^2([0,2\pi]; |\omega|): \ f\in AC[0,2\pi],\ \int_{[0,2\pi]}|f'(x)|^2dx<\infty,\ f(0)=f(2\pi)=0\Big\},
\end{equation}
and, for $t\in [0,2\pi)$,
\begin{equation}\label{II_1_03B}
 \dom\big(\gt(t)\big)=\Big\{f\in L^2([0,2\pi]; |\omega|): \ f\in AC[0,2\pi],\ \int_{[0,2\pi]}|f'(x)|^2dx<\infty,\ f(0)=e^{it}f(2\pi)\Big\}.
\end{equation}
Note that the above description of the form domains was obtained by M.G. Krein \cite[\S 6]{kre46} in the case $\omega\equiv 1$. However, the methods used there extend to the case of arbitrary positive weights $\omega\in L^1[0,2\pi]$.

If $\omega$ is indefinite, then $L_D$ and $L(t)$ are self-adjoint operators in the Krein space $\mathcal{K}=L^2([0,2\pi];\omega)$ with the inner product
\[
[f,g]_{\mathcal{K}}:=\int_0^{2\pi}f(x)\overline{g(x)}\omega(x)dx.
\]
Let us denote by $\ti{J}$ the operator of multiplication by  $\sgn \omega(x)$ in $L^2([0,2\pi]; |\omega|)$. Clearly, $\ti{J}=\ti{J}^{-1}=\ti{J}^*$. In the following we assume that $L_D$ and $L(t)$ are $\ti{J}$-nonnegative operators in $L^2([0,2\pi]; |\omega|)$, i.e.,
\[
[L_Df,f]_{\mathcal{K}}\geq 0\quad \text{for}\quad f\in \dom(L_D),\quad \text{and}\quad  [L(t)f,f]_{\mathcal{K}}\geq 0 \quad \text{for} \quad f\in \dom \big(L(t)\big),\ \ t\in[0,2\pi).
\]
Note that the spectra of the operators $L_D$ and $L(t)$ are discrete since so are the spectra of $A_D=\ti{J}L_D$ and $A(t)=\ti{J}L(t)$, respectively. Thus the resolvent sets $\rho(L_D)$ and $\rho\bigl(L(t)\bigr)$ are nonempty and hence
the operators $L_D$ and $L(t)$ have real spectra (cf. \cite[Proposition 2.2]{CurLan}).
Note also that only $0$ may be a nonsemisimple eigenvalue of $L_D$ and $L(t)$. Moreover, the algebraic multiplicity of $0$ is not greater than $2$ (see, e.g., \cite{Fle96, Lan82}).

Further (see \cite{Lan82}), a definitizable operator $\mathcal{A}$ in a Krein space $\cK$ has a spectral function $E_\mathcal{A}$. This function has the properties similar to the properties of a spectral function of a self-adjoint operator in a Hilbert space. The main difference is the occurrence of
\emph{critical points}. Significantly different behavior of the
spectral function $E_{\mathcal{A}} (\cdot)$ occurs at a \emph{singular
critical point} in any neighborhood of which
$E_{\mathcal{A}} (\cdot)$ is unbounded. A critical point is  \emph{regular} if it is not singular. It should be stressed that only $0$ and $\infty$ may be critical points of a definitizable $J$-nonnegative operator $\mathcal{A}$. Furthermore, $\mathcal{A}$ is similar to a self-adjoint operator if and only if $\ker \mathcal{A}=\ker \mathcal{A}^2$ and all its critical points are not singular. 

We emphasize that $\infty$ is a critical point for the operators $L_D$ and $L(t)$. In the last three decades, the question of the regularity of $\infty$ has been intensively studied \cite{Beals85, BC04, BF11, CurLan, Fle96, Fle_07, Fle_08, Par03, Pyat89, Pyat05, Vol96} (see also references therein). It turned out that the answer significantly depends on local behavior of the weight  $\omega$. Firstly, recall the following notion (cf. \cite[Remark 3.3]{CurLan}).
\begin{definition}[\cite{CurLan}]\label{def_II.1}
Point $x_0\in\R$ is called \emph{turning} if $(x-x_0)\omega(x)$ is of one sign for almost all $x$ in some neighborhood of $x_0$.
Assume that $\omega$ is absolutely continuous on $(x_0, x_0+\varepsilon]$ and $[x_0-\varepsilon,x_0)$ for some $\varepsilon>0$. Turning point $x_0\in \R$ is called \emph{simple} if there exists  $s_1>0$, $s_1\neq 1$, such that the function $\bigl(\omega(x)/\omega(s_1x)\bigr)'$ is bounded in a neighborhood of $x_0$ and
\begin{equation} \label{e BC Simple}
\lim_{x\downarrow x_0}\frac{\omega(x)}{\omega(s_1x)}\neq s_1.
\end{equation}
\end{definition}
\begin{remark}\label{rem_II.1}
Definition \ref{def_II.1} implies that a turning point $x_0$ is simple if there exist $\beta_\pm>-1$ and positive functions $ p_+ \in C^1 [x_0,x_0+\delta]$, $p_-\in C^1 [x_0-\delta,x_0]$ such that
\begin{equation} \label{e BC Simple_sec}
\omega(x)= \sgn (x-x_0) p_\pm (x)|x-x_0|^{\beta_\pm} , \quad \pm (x-x_0) \in
(0,\delta).
\end{equation}
\end{remark}
Combining \cite[Theorem 3.6]{CurLan}, \cite[Theorem 6]{Par03}, \cite[Theorem 4.2]{Pyat05}, and \cite[Theorem 3.1]{Fle_07}, we arrive at the following necessary and sufficient conditions for the critical point $\infty$ to be regular.

\begin{theorem}\label{th_II.1}
Assume that $\omega$ has a finite number of turning points on $[0,2\pi)$. Assume also that $\omega$ has the same sign in neighborhoods of $x=0$ and $x=2\pi$.
\item $(i)$ $\infty$ is a regular critical point for the operators $L_D$ and  $L(t)$ if all turning points of $\omega$ are simple.
\item $(ii)$ Assume, in addition, that $\omega$ is odd\footnote{the function $\omega$ is called odd in the neighborhood of $x_0$ if there exists $\varepsilon_0>0$ such that $\omega(x_0+\varepsilon)=-\omega(x_0-\varepsilon)$ for all $\varepsilon\in(0,\varepsilon_0)$} and continuously differentiable in a punctured neighborhood of each turning point $x=x_0$, and the following limit exists
    \begin{equation}\label{eq:fleige}
    \lim_{x\downarrow x_0}\left(\frac{\omega(x)}{\mu\omega(\mu x)}\right)'=\lim_{x\downarrow x_0}\frac{\omega'(x)\omega(\mu x)-\mu\omega(x)\omega'(\mu x)}{\mu\omega(\mu x)^2}.
    \end{equation}
     Then $\infty$ is a regular critical point of $L_D$ and $L(t)$ if and only if all turning points are simple.
\end{theorem}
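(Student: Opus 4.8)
The plan is to combine a \emph{localization principle} for the critical point $\infty$ with the model analyses of \cite{CurLan, Par03, Pyat05, Fle_07}. A preliminary observation is that $q\in L^1$ is form-bounded relative to $-\rD^2/\rD x^2$ with relative bound zero, so it affects neither the form domains \eqref{II_1_02B}--\eqref{II_1_03B} nor the regularity of $\infty$ (perturbations of this kind preserve regularity of the critical point); hence we may assume $q\equiv 0$. The first real step is to reduce the regularity of $\infty$ for $L_D$ and $L(t)$ to a finite family of purely local problems, one attached to each turning point. Since $\omega$ has finitely many turning points in $[0,2\pi)$ and, by hypothesis, one and the same sign near $x=0$ and near $x=2\pi$ (so that neither endpoint of the interval, nor the point at which $L(t)$ identifies $x=2\pi$ with $x=0$, is a turning point), we may cut $[0,2\pi]$ into finitely many subintervals each of which contains at most one turning point in its interior, the remaining pieces carrying a weight of constant sign. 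On a piece where $\sgn\omega$ is constant the corresponding Dirichlet operator is, up to the sign operator $\ti{J}$, a semibounded self-adjoint operator, hence $\infty$ is trivially a regular critical point there. The orthogonal sum $L^{\oplus}$ of these subinterval operators differs from $L_D$, and $L_D$ in turn differs from $L(t)$, by a finite-rank perturbation in the resolvent sense; since regularity of $\infty$ is preserved under such perturbations (cf.\ \cite{CurLan, Fle96}), $\infty$ is a regular critical point of $L_D$, of $L(t)$ and of $L^{\oplus}$ simultaneously, and the last happens if and only if $\infty$ is regular for the local Dirichlet operator $L_{x_0}$ generated by $\ell$ on a small interval $(x_0-\delta,x_0+\delta)$ around each turning point $x_0$.

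For part $(i)$, assume each turning point is simple in the sense of Definition \ref{def_II.1} (by Remark \ref{rem_II.1} this includes, in particular, the power-type weights $\omega(x)=\sgn(x-x_0)\,p_\pm(x)\,|x-x_0|^{\beta_\pm}$ with $\beta_\pm>-1$ and $p_\pm\in C^1$ positive). For a weight satisfying the simplicity (Beals) condition near $x_0$, the regularity of $\infty$ for the local operator $L_{x_0}$ is exactly the content of \cite[Theorem 3.6]{CurLan} and its refinement \cite[Theorem 6]{Par03} (equivalently, since $L_{x_0}$ has discrete spectrum, the Riesz basis property of its eigenfunctions, cf.\ Section \ref{sec_II}). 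By the localization step, $\infty$ is then a regular critical point of $L_D$ and $L(t)$.

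For part $(ii)$, the sufficiency follows from part $(i)$. For the necessity, suppose some turning point $x_0$ is \emph{not} simple, while $\omega$ is odd and continuously differentiable in a punctured neighborhood of $x_0$ and the limit \eqref{eq:fleige} exists. Under precisely these hypotheses, \cite[Theorem 4.2]{Pyat05} and \cite[Theorem 3.1]{Fle_07} show that the eigenfunctions of $L_{x_0}$ fail to form a Riesz basis, that is, $\infty$ is a singular critical point of $L_{x_0}$; by the localization principle $\infty$ is then a singular critical point of $L_D$ and of $L(t)$, which completes the proof.

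The step I expect to be the main obstacle is the localization itself, i.e.\ the rigorous statement that regularity of $\infty$ is a local property, insensitive to the definite parts of $\omega$ and stable under finite-rank boundary perturbations. This is most cleanly carried out at the level of the form domains \eqref{II_1_02B}--\eqref{II_1_03B}, keeping in mind that $L_D$ and $L(t)$ are only $\ti{J}$-nonnegative, so that $0$ may occur as an eigenvalue of algebraic multiplicity at most two; one has to check that this plays no role in the analysis at $\infty$. One must also verify that the regularity requirements on $\omega$ imposed by the quoted model theorems --- boundedness of $\bigl(\omega(x)/\omega(s_1x)\bigr)'$, the $C^1$-smoothness of $p_\pm$, and oddness together with differentiability near $x_0$ --- pass to the localized weights, which is immediate as all of them are stated in a neighborhood of the respective turning point.
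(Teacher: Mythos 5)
Your argument is correct and follows essentially the same route as the paper: reduce to $q\equiv 0$ using the fact that the form domains \eqref{II_1_02B}--\eqref{II_1_03B} do not depend on $q$ (the paper invokes \cite[Theorem 3.5]{Cur_85} at this point), localize the regularity of $\infty$ to neighbourhoods of the turning points (the paper delegates this localization to \cite[Theorem 4.2]{Pyat05}, whereas you sketch it by hand via finite-rank perturbations of the form domains), and then apply the model results of \cite{CurLan}, \cite{Par03} and \cite{Fle_07}. I see no gap.
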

\begin{proof}
In the case $q\equiv 0$, Theorem \ref{th_II.1} follows by combining \cite[Theorem 3.6]{CurLan}, \cite[Theorem 6]{Par03}, \cite[Theorem 4.2]{Pyat05}, and \cite[Theorem 3.1]{Fle_07}. Namely, since $\omega$ has a finite number of turning points on $[0,2\pi)$, by Theorem 4.2 from \cite{Pyat05} the Riesz basis property depends on a local behavior of the weight function $\omega$ in a neighborhood of its turning points. In particular, by \cite[Theorem 3.6]{CurLan} ,$\infty$ is a regular critical point for the operators $L_D$ and  $L(t)$ if all turning points are simple in the sense of Definition \ref{def_II.1}. If $\omega(x)$ is odd in a punctured neighborhood of each turning point, then \cite[Theorem 6]{Par03} and \cite[Theorem 3.1]{Fle_07} prove $(ii)$ in the case $q\equiv 0$.

To prove Theorem \ref{th_II.1} in the general case, observe that the form domain of the operator $\ti{J}L(t)$ given by \eqref{II_1_03B} does not depend on $q\in L^1[0,2\pi]$. Hence Theorem 3.5 from \cite{Cur_85} completes the proof.
%
\end{proof}

\begin{remark}\label{rem_inf_c_p}
Let us note that the regularity of the critical point $\infty$ has not been studied yet for the operators $L(t)$ and $L_D$ in the case when the weight function $\omega$ has infinitely many turning points on $[0,2\pi)$. It is only known that in this case the situation is much more complicated. In \cite{Pyat89}, S.G. Pyatkov proved the following fact: if $\omega(x)=\sgn \sin\left(\frac{2\pi}{ x}\right)$ on $(0,2\pi)$, then \emph{$\infty$ is the singular critical point for $L_D$}. Let us stress that in this example all turning points are simple in the sense of Definition \ref{def_II.1}.
\end{remark}

\section{J-positive Sturm--Liouville operators with periodic coefficients}\label{sec_III}

\subsection{Floquet Theory}\label{ss_III.1}

In this subsection we briefly recall some standard results on second order differential operators with real periodic coefficients. This enables us to prove some basic spectral properties of indefinite Sturm--Liouville operators with periodic coefficients.

Let $c(\cdot,z)$ and $s(\cdot,z)$ be the fundamental system of
 solutions  of equation \eqref{I_01} satisfying
\begin{equation}
c(0,z)=s'(0,z)=1, \quad
c'(0,z)=s(0,z)=0, \quad z\in\CC.  \label{3.3}
\end{equation}
For each $x\in\R$, $c(x,z)$ and $s(x,z)$ are entire with respect
to $z$. The monodromy matrix
$\cM(z)$ is then given by
\begin{equation}
\cM(z)=\begin{pmatrix} c(2\pi,z) & s(2\pi,z) \\
c'(2\pi,z) & s'(2\pi,z) \end{pmatrix}, \quad
z\in\CC.  \label{3.4}
\end{equation}
Its eigenvalues $\rho_\pm(z)$, the Floquet multipliers, satisfy
$\rho_+(z)\rho_-(z)=1$
since $ \det(\cM(z))=1$.
The Floquet discriminant $\Delta(\cdot)$ is then defined by
\begin{equation}
\Delta(z)=\frac{1}{2}\tr(\cM(z))=\frac{c(2\pi,z)+s'(2\pi,z)}{2}, \quad z\in\CC,  \label{3.6}
\end{equation}
and one obtains
\begin{equation}
\rho_\pm (z)=\Delta(z)\pm i\sqrt{1-\Delta(z)^2}.  \label{3.7}
\end{equation}
with an appropriate choice of the square root branches. We also note that
$|\rho_\pm(z)|=1$  if and only if $\Delta(z)\in [-1,1]$.
Moreover,
\begin{equation}
\sigma(L_D)=\{z\in\CC:\ s(2\pi,z)=0\},\qquad \sigma(L(t))=\{z\in\CC:\ \Delta(z)=\cos t\}.\label{3.7D}
\end{equation}
The
Floquet solutions $\psi_\pm(\cdot,z)$ of $\ell y=z y$
normalized by $\psi_\pm(0,z)=1$ are then given by
\begin{align}\label{3.20A}
\psi_\pm(x,z)=c(x,z)+\frac{\rho_\pm(z)-c(2\pi,z)}{s(2\pi,z)}s(x,z)  =c(x,z)+m_\pm(z)s(x,z),\\
\quad m_\pm(z)=\frac {-[c(2\pi,z)-s'(2\pi,z)]/2\pm i\sqrt{1-\Delta(z)^2}}{s(2\pi,z)}, \quad z\in\CC\setminus\sigma(L_D). \label{3.20}
\end{align}
One then verifies (for $z\notin \sigma(L_D)$, $x\in\R$),
\begin{gather}
\psi_{\pm}(x+2\pi,z)=\rho_\pm(z) \psi_\pm(x,z)=e^{\pm it}\psi_{\pm}(x,z) \, \text{ with
$\Delta(z)=\cos(t)$,}   \label{l24} \\
 W(\psi_+(\cdot,z),\psi_-(\cdot,z))= m_-(z)- m_+(z)= -
\frac{2i\sqrt{1-\Delta (z)^2}}{s(2\pi,z)}.   \label{3.21} 
\end{gather}
where $W(f,g)=fg'-f'g$ is the Wronskian. Further, denote $\mathcal{D}:=\bigcup_{t\in[0,2\pi)} \sigma(L(t))\subseteq\R$ and choose the square root branches such that $|\rho_-(z)|>1$ and $|\rho_+(z)|<1$ on $\CC\setminus \mathcal{D}$. Then
\[
\psi_\pm(\cdot,z)\in L^2(\R_\pm,|\omega|),\qquad z\notin \mathcal{D}\cup \sigma(L_D).
\]

Next, we establish the connection between the original
Sturm--Liouville operator $L$ in $L^2(\R,|\omega|)$ and the family of operators
$\{L(t)\}_{t\in[0,2\pi)}$ in $L^2([0,2\pi];|\omega|)$ using the notion of a direct
integral and the Gelfand transform \cite{Gel50} (see also \cite[Chapter XIII.16]{RS_IV}). To this end we consider
the direct integral of Hilbert spaces with constant fibers $L^2([0,2\pi];|\omega|)$
(for details see \cite{BirSol_87, RS_IV})
\begin{equation}
\gH=\frac{1}{2\pi}\int^{\oplus}_{[0,2\pi]}  L^2([0,2\pi];|\omega|)\ dt.   \label{3.31}
\end{equation}
Elements $F\in\gH$ are represented
by $F=\big\{F(\cdot,t)\in L^2([0,2\pi];|\omega|)\big\}_{t\in[0,2\pi]}$ and
\begin{equation}
 \|F\|^2_{\gH} = \frac{1}{2\pi}
\int_{[0,2\pi]} dt\, \|F(\cdot,t)\|^2_{L^2([0,2\pi];|\omega|)}
= \frac{1}{2\pi}
\int_{[0,2\pi]} \int_{[0,2\pi]} |F(x,t)|^2|\omega(x)|dx\,  dt  \label{3.32}
\end{equation}
with scalar product in $\gH$ defined by
\begin{equation}\label{3.33}
(F,G)_{\gH} = \frac{1}{2\pi}
\int_{[0,2\pi]}  (F(\cdot,t),G(\cdot,t))_{L^2([0,2\pi];|\omega|)} dt, \quad F, G \in \gH. 
\end{equation}
The Gelfand transform $\cG\colon  L^2(\R;|\omega|) \to \gH$ is then defined by (cf. \cite{Gel50, RS_IV})
\begin{equation}
(\cG f)(x,t)=F(x,t)=s-\lim_{N \uparrow \infty}
\sum_{n=-N}^N  f(x+ 2\pi n) e^{-int},
     \label{3.34}
\end{equation}
where $s-\lim$ denotes the limit in $\gH$.
By inspection (see, e.g., \cite{Gel50} and \cite[proof of the Lemma on p.289]{RS_IV}), $\cG$ is a unitary operator and the inverse transform ${\cG}^{-1}: \gH \to L^2(\R, |\omega|)$ is given by
\begin{equation}
 ({\cG}^{-1}F)(x+ 2\pi n) = \frac{1}{2\pi}\int_{[0,2\pi]}
 F(x,t) e^{int}dt\, , \quad n\in\Z.   \label{3.35}
\end{equation}
Moreover, one then infers that
\begin{equation}
\cG L {\cG}^{-1} = \mathcal{L}:=\frac{1}{2\pi}\int^{\oplus}_{[0,2\pi]}
L(t) dt\, .
 \label{3.36}
\end{equation}
Namely, observe that it suffices to establish \eqref{3.36} in the case of definite weights, $\omega(x)>0$ for a.e. $x\in \R$. Indeed, the latter clearly follows from the relations $\cG J\cG =\frac{1}{2\pi}\int^\oplus_{[0,2\pi]}\ti{J} dt$ and $\ti{J}=\ti{J}^{*}=\ti{J}^{-1}$, where $\ti{J}$ is the operator of multiplication by $\sgn \omega(x)$ in $L^2([0,2\pi];|\omega|)$. So, assume that $\omega$ does not change sign on $\R$. Then $L$ is self-adjoint in $L^2(\R,|\omega|)$. Further, for a function $f\in \dom(L_{\min})$  the sum $F(x,t)=(\cG f)(x,t)$ given by \eqref{3.34} is finite and hence convergent. Here $\dom(L_{\min})$ is the minimal domain of the differential expression $\ell[\cdot]$,
\[
\dom(L_{\min})=\{f\in C^\infty_{\mathrm{comp}}(\R): \  \ell[f]\in L^2(\R;|\omega|)\}.
\]
Moreover, $F\in C^\infty$ and $\ell[F]=\cG(\ell[f])$ since the sum in \eqref{3.36} is finite. The latter also implies $\ell[F]\in \gH$ and $\ell[F(\cdot,t)]\in L^2([0,2\pi];|\omega|)$ for every $t\in [0,2\pi)$.  Finally, it is straightforward to check that $F(2\pi, t)=\mathrm{e}^{\I t}F(0,t)$ and
$F'_x(2\pi, t)=\mathrm{e}^{\I t}F'_x(0,t)$. Therefore, we get $F(\cdot,t)\in \dom \big(L(t)\big)$ for every $t\in [0,2\pi)$ and hence $F\in \dom(\mathcal{L})$.
So, to complete the proof of \eqref{3.36} it remains to note that $L$ is essentially self-adjoint on $\dom(L_{\min})$.

The following theorem describes the well-known fundamental properties of the spectrum of the operator $L$
(cf.\ \cite{KhoRof}, \cite{McG_65}, see also \cite{LanDah86,Zet05}).

\begin{theorem} \label{th_III.1}
 Assume that the operator $L$ is $J$-nonnegative. Then the spectrum of $L$  is real, purely continuous,
$\sigma(L)=\sigma_{\rm c}(L)$. Moreover, it consists of countably many closed intervals and
\begin{align}
\sigma(L) &=\big\{\lambda\in\CC\,\big|\, -1\leq \Delta(\lambda)
\leq 1\big\}=\bigcup_{t\in[0,2\pi)} \sigma(L(t)).  \label{3.12} 
\end{align}
\end{theorem}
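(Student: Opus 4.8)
The plan is to exploit the Gelfand decomposition \eqref{3.36}, $\cG L \cG^{-1} = \cL = \frac{1}{2\pi}\int_{[0,2\pi]}^{\oplus} L(t)\,dt$, which reduces the study of $\sigma(L)$ to the study of the fiber operators $\{L(t)\}_{t\in[0,2\pi)}$. Since $\cG$ is unitary, $\sigma(L) = \sigma(\cL)$, and by the general theory of direct integrals (see \cite{RS_IV}, Chapter XIII.16) one has $\sigma(\cL) = \overline{\bigcup_{t\in[0,2\pi)}\sigma(L(t))}$, with $\lambda \in \rho(\cL)$ precisely when $(L(t)-\lambda)^{-1}$ exists and is uniformly bounded in $t$. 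First I would record that each $L(t)$ is a $\ti{J}$-nonnegative operator with discrete spectrum (as noted in Subsection \ref{ss_II.1}), that $0$ is its only possible non-semisimple eigenvalue with algebraic multiplicity at most $2$, and that by \eqref{3.7D} its spectrum is $\{z : \Delta(z)=\cos t\}$. Combining the fiber spectra over $t\in[0,2\pi)$ and using the continuity of $\Delta$ together with $\cos t$ ranging over $[-1,1]$ gives $\bigcup_{t}\sigma(L(t)) = \{\lambda : -1 \le \Delta(\lambda) \le 1\}$; since $\Delta$ is entire and real on $\R$, the preimage $\Delta^{-1}([-1,1])$ is a countable union of closed intervals (the "bands"), and it is already closed, so no closure operation is needed. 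This establishes \eqref{3.12} and the reality of the spectrum.

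Next I would argue that the spectrum is \emph{purely continuous}, $\sigma(L) = \sigma_c(L)$, i.e., $L$ has no eigenvalues. Suppose $Lf = \lambda f$ with $f \in L^2(\R,|\omega|)\setminus\{0\}$; then $f$ solves $\ell y = \lambda y$ on $\R$ and lies in $L^2$. If $\lambda$ is not in a band, the two linearly independent Floquet solutions $\psi_\pm(\cdot,\lambda)$ grow exponentially in one direction (one in $+\infty$, one in $-\infty$), so no nontrivial solution is in $L^2(\R)$. If $\lambda$ is in a band, then $|\rho_\pm(\lambda)|=1$: either the two Floquet solutions are genuinely periodic/antiperiodic-modulated and bounded but \emph{not} decaying (so not in $L^2(\R)$), or we are at a band edge where there may be a single bounded solution and a linearly growing one — in neither case does an $L^2(\R)$ solution exist. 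The residual spectrum is empty because $L$, being $J$-nonnegative with nonempty resolvent set on each fiber, is a definitizable (indeed $J$-self-adjoint in the Krein space $L^2(\R,\omega)$) operator, so $\sigma(L) = \sigma(L)^* $ forces $\sigma_r(L) = \varnothing$; alternatively one invokes $\ker(L-\lambda) = \{0\}$ together with the fact that $\overline{\ran(L-\lambda)}^\perp = \ker(L^*-\bar\lambda)$ and $L^*$ has the same form, so the range is dense. Hence every spectral point is in $\sigma_c(L)$.

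The main obstacle I anticipate is handling the behavior at the \emph{band edges} and, relatedly, justifying pure continuity uniformly. At a band edge $\Delta(\lambda) = \pm 1$ the two Floquet multipliers coincide, the fiber operator $L(t)$ (with $\cos t = \pm 1$) may have a Jordan block at $\lambda$, and the resolvents $(L(t)-z)^{-1}$ can blow up as $t$ approaches the edge value — this is exactly the phenomenon of the critical point $\infty$ and is the reason the theorem only claims pure continuity of the \emph{spectrum} and does not assert similarity. For the present statement, though, it suffices to note that a possible Jordan block in a single fiber is a measure-zero event in $t$ and does not produce an $L^2(\R)$ eigenfunction of $L$ itself (the would-be generalized eigenfunction grows linearly in $x$), so the spectral-type claim $\sigma(L)=\sigma_c(L)$ survives. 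I would therefore be careful to phrase the band-edge analysis in terms of solutions of $\ell y = \lambda y$ on the whole line rather than in terms of the fiber resolvents, keeping the argument clean. The remaining ingredients — that $\Delta$ is entire, that $\Delta^{-1}([-1,1])$ consists of countably many intervals, and that $\cG$ intertwines $L$ with $\cL$ — are already available from \eqref{3.36}, \eqref{3.7D}, and standard Floquet theory as recalled above.
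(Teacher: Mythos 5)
The paper does not actually prove Theorem \ref{th_III.1}: it is quoted as well known, with pointers to \cite{KhoRof}, \cite{McG_65}, \cite{LanDah86}, \cite{Zet05}. Your plan reconstructs precisely the argument those sources (and the machinery of Subsection \ref{ss_III.1}) rest on --- the Gelfand decomposition \eqref{3.36}, the fiber spectra \eqref{3.7D}, and the Floquet-solution analysis for the absence of eigenvalues --- so in substance it is the intended proof, and it is correct. One step deserves more care than you give it: the identity $\sigma(\mathcal{L})=\overline{\bigcup_{t}\sigma(L(t))}$ in \cite[Ch.~XIII.16]{RS_IV} is proved there for \emph{self-adjoint} fibers, where $\|(L(t)-\lambda)^{-1}\|=\mathrm{dist}\bigl(\lambda,\sigma(L(t))\bigr)^{-1}$ supplies the required uniform resolvent bound for free; here the fibers are only $\ti{J}$-self-adjoint, so for $\lambda$ off $\bigcup_{t}\sigma(L(t))$ the inclusion $\lambda\in\rho(\mathcal{L})$ still needs a uniform bound on $\|(L(t)-\lambda)^{-1}\|$. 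This is easy to obtain --- either from norm-continuity of $t\mapsto(L(t)-\lambda)^{-1}$ on the compact parameter circle (the resolvent kernel is built explicitly from $c(\cdot,\lambda)$, $s(\cdot,\lambda)$ and the boundary data $e^{\I t}$), or by writing the Green's function of $L$ itself in terms of the Floquet solutions $\psi_\pm(\cdot,\lambda)\in L^2(\R_\pm,|\omega|)$ --- but it should be said explicitly rather than absorbed into a citation. The remaining ingredients are sound: reality of each $\sigma(L(t))$ from definitizability, $\Delta^{-1}([-1,1])$ a countable union of closed real intervals, the observation that Bloch and linearly growing solutions are never in $L^2(\R,|\omega|)$ because $|\omega|$ is periodic and positive a.e., and $\sigma_r(L)=\varnothing$ from $J$-self-adjointness.
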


\subsection{Regularity of critical points.}\label{ss_III.2}

The similarity problem for definitizable operators is closely connected to the regularity problem for critical points of the corresponding operator. For $J$-positive operators, only $0$ and $\infty$ may be  critical points. In this subsection we investigate the problem of regularity of critical points of $J$-positive operator $L$ defined by \eqref{I_02}.

\subsubsection{Regularity of $\infty$.}

We begin with the study of the critical point $\infty$. Note that $\infty$ is indeed a critical point of $L$ since the spectrum of $L$ accumulates at both $-\infty$ and $+\infty$.

\begin{theorem}\label{th_inftycond}
Let $q$ and $\omega$ be $2\pi$-periodic real functions and let the operator $L$ be defined by \eqref{I_02}. Assume that $\omega$ has a finite number of turning points on $[0,2\pi)$. 
Then:
\item $(i)$ \ \ $\infty$ is a regular critical point of $L$ if all turning points of $\omega$ are simple.
\item $(ii)$ \ \ if some turning point $x_0$ is not simple and, in addition, $\omega$ is odd and continuously differentiable in a punctured neighborhood of $x_0$ and the limit in \eqref{eq:fleige} exists, then $\infty$ is a singular critical point of $L$.
\end{theorem}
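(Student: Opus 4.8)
The plan is to reduce the regularity question for the critical point $\infty$ of the fibered operator $L$ on $L^2(\R,|\omega|)$ to the corresponding question for the fiber operators $L(t)$ on $L^2([0,2\pi];|\omega|)$, for which Theorem~\ref{th_II.1} already gives the answer. The bridge is the Gelfand decomposition \eqref{3.36}, $\cG L\cG^{-1}=\mathcal{L}=\frac{1}{2\pi}\int^\oplus_{[0,2\pi]}L(t)\,dt$, together with the fact that the spectral function of a direct integral of definitizable operators decomposes fiberwise: if $E_{L(t)}$ denotes the spectral function of $L(t)$ and $E_L$ that of $L$, then $\cG E_L(\Delta)\cG^{-1}=\frac{1}{2\pi}\int^\oplus E_{L(t)}(\Delta)\,dt$ for every interval $\Delta$ whose endpoints are not critical points. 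Consequently $\|E_L(\Delta)\|=\esssup_{t\in[0,2\pi)}\|E_{L(t)}(\Delta)\|$, so $\infty$ is a regular critical point of $L$ if and only if the norms $\|E_{L(t)}(\Delta)\|$ are bounded \emph{uniformly in $t$} as $\Delta$ runs over intervals $[-N,N]$ (or their complements), $N\to\infty$.

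For part $(i)$: if all turning points of $\omega$ on $[0,2\pi)$ are simple, Theorem~\ref{th_II.1}$(i)$ tells us $\infty$ is regular for each $L(t)$, but regularity alone for each fiber is not enough — uniformity in $t$ is needed. Here I would invoke the quantitative side of the Beals/\'Curgus--Langer machinery: the bound on $\|E_{L(t)}(\cdot)\|$ near $\infty$ produced there depends only on the local data of $\omega$ at its turning points (the exponents $\beta_\pm$ and the $C^1$-norms of $p_\pm$ in \eqref{e BC Simple_sec}) and on a lower bound for the form $\gt(t)$, and crucially the form domain $\dom(\gt(t))$ in \eqref{II_1_03B} and the form itself depend on $t$ only through the single scalar boundary relation $f(0)=e^{it}f(2\pi)$. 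Since the turning-point data are independent of $t$, and since the family $\{\gt(t)\}_{t\in[0,2\pi)}$ is uniformly lower semibounded with a uniform lower bound on its "definite part" (again by an argument as in \'Curgus--Langer, the constants being controlled by the boundary condition uniformly in $t$), one gets a $t$-independent bound $\sup_t\sup_N\|E_{L(t)}([-N,N])\|<\infty$. Transferring back through $\cG$ gives regularity of $\infty$ for $L$.

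For part $(ii)$: suppose some turning point $x_0$ is not simple, with $\omega$ odd and $C^1$ near $x_0$ and the limit \eqref{eq:fleige} existing. By Theorem~\ref{th_II.1}$(ii)$ (the Parfenov--Fleige necessity part), $\infty$ is then a \emph{singular} critical point of $L(t)$ for, say, $t=0$ (the Dirichlet-type periodic fiber) — in fact for each fixed $t$, since the argument there is again insensitive to the scalar boundary parameter. Hence $\sup_N\|E_{L(0)}([-N,N])\|=\infty$. Since $\|E_L([-N,N])\|\ge\|E_{L(0)}([-N,N])\|$ after the unitary identification (more precisely, fiber norms are $\le$ the direct-integral norm, and one can realize the fiber norm in the limit by localizing in $t$ near $0$ using the continuity of $t\mapsto L(t)$ in norm resolvent sense away from the finitely many band edges), we conclude $\|E_L([-N,N])\|\to\infty$, i.e. $\infty$ is a singular critical point of $L$.

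The main obstacle is the \emph{uniformity in the Floquet parameter $t$} in part $(i)$: one must check that the explicit constants in the \'Curgus--Langer/Beals estimates for $\|E_{L(t)}(\cdot)\|$ near $\infty$ can be chosen independently of $t\in[0,2\pi)$. This comes down to two uniform facts about the family $\{\gt(t)\}$: a uniform lower semibound, and a uniform control of the embedding/interpolation constants entering the regularity proof — both of which hold because the only $t$-dependence is through the unimodular factor $e^{it}$ in a single boundary condition, but spelling this out carefully (for all $\omega\in L^1$ with the stated turning-point structure, and for general $q\in L^1$ via the form-perturbation argument already used in the proof of Theorem~\ref{th_II.1}) is the technical heart of the argument.
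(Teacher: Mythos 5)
Your reduction to the fibers $L(t)$ via the Gelfand decomposition \eqref{3.36} and the formula $\|E_L(\Delta)\|=\esssup_{t}\|E_{L(t)}(\Delta)\|$ is the right framework, and your part $(ii)$ is essentially the paper's argument: since under the stated hypotheses $\infty$ is singular for \emph{every} fiber $L(t)$ by Theorem \ref{th_II.1}$(ii)$, regularity of $\infty$ for $L$ would force regularity of $\infty$ for $L(t)$ for almost every $t$ (intersect the countably many null exceptional sets coming from the intervals $[-N,N]$), a contradiction. Your detour through norm-resolvent continuity in $t$ to ``realize'' the single fiber $t=0$ is unnecessary once you have singularity for all $t$; note that a bare inequality ``fiber norm $\le$ direct-integral norm'' is false for an essential supremum at a fixed $t_0$, so the clean way out is the almost-every-$t$ contrapositive, which is what the paper does.

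The genuine gap is in part $(i)$. You correctly observe that fiberwise regularity is not enough and that one needs control of $\|E_{L(t)}(\cdot)\|$ near $\infty$ \emph{uniformly in $t$}, but you then only assert that the constants in the Beals/\'Curgus--Langer estimates depend solely on the local turning-point data and can be traced uniformly through the boundary condition $f(0)=e^{it}f(2\pi)$; you explicitly leave this as the unproven ``technical heart''. Carrying it out would require a quantitative reworking of \cite{CurLan} and \cite{Par03}. The paper closes this gap without any quantitative estimate: the \'Curgus--Langer construction yields a bounded, boundedly invertible, $\ti{J}$-positive operator $W$ on $L^2([0,2\pi];|\omega|)$ mapping the form domain $\dom\big(\gt(t)\big)$ of \eqref{II_1_03B} into the subspace of form-domain functions vanishing at the turning points, and --- this is the key observation --- $W$ is built from local data at the interior turning points only, so (after normalizing so that $\omega$ does not change sign at $x=0$, hence at $x=2\pi$) the \emph{same} $W$ serves for every $t$. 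One then forms $\cW=\frac{1}{2\pi}\int^{\oplus}_{[0,2\pi]}W\,dt$, which is automatically bounded with bounded inverse and $\mathcal{J}$-positive and preserves the form domain of $\mathcal{J}\mathcal{L}$, and the abstract criterion \cite[Theorem 3.5]{Cur_85} is applied once, directly to the direct-integral operator $\mathcal{L}$, rather than fiber by fiber. If you wish to keep your route, the precise statement you must prove is the $t$-independence (or at least uniform boundedness in $t$) of this operator $W$; proving uniform spectral-function bounds directly is considerably harder than that.
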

\begin{proof}
$(i)$ Assume that $\omega$ has only a finite number of turning points on $[0,2\pi)$. Without loss of generality we can assume that $\omega$ does not change sign at $x=0$.
Let $X=\{x_i\}_{i=1}^N$, $N<+\infty$, be the set of turning points of $\omega$ on $(0,2\pi)$. Set
\[
\mathcal{D}_0(t):=\big\{f\in \dom\big(\gt(t)\big):\ f(x_i)=0,\ x_i\in X\big\},
\]
where $\dom\big(\gt(t)\big)$ is the form domain of $A(t):=\ti{J}L(t)$ (see \eqref{II_1_03B}).

If all turning points are simple, then (for details see the proof of Theorem 3.6 in \cite{CurLan}) there exists a bounded and boundedly invertible $\ti{J}$-positive operator $W$ in $L^2([0,2\pi];|\omega|)$ such that $W\big(\dom\big(\gt(t)\big)\big)\subseteq \mathcal{D}_0(t)$.  Moreover, the operator $W$ does not depend on $t$ since $\omega$ does not change sign at $x=0$ and hence at $x=2\pi$.

According to the decomposition \eqref{3.31}, define the following operators in $\mathfrak{H}$
\[
\cW:=\frac{1}{2\pi}\int^{\oplus}_{[0,2\pi]} W\ dt,\qquad \mathcal{J}:=\frac{1}{2\pi}\int^{\oplus}_{[0,2\pi]} \widetilde{J}\ dt.
\]
Note that $\cG J\cG^{-1}=\mathcal{J}$. It is straightforward to check that $\cW$ is bounded and boundedly invertible $\mathcal{J}$-positive operator in $\gH$. Moreover, it is clear that $\cW:\mathcal{D}(\mathcal{A})\to\mathcal{D}(\mathcal{A})$, where $\mathcal{D}(\mathcal{A})$ denotes the form domain of $\mathcal{A}=\mathcal{JL}$. Therefore, by \cite[Theorem 3.5]{Cur_85}, $\infty$ is a regular critical point of $\mathcal{L}$. Hence, by \eqref{3.36} we conclude that $\infty$ is a regular critical point of $L$.

$(ii)$  If the weight $\omega$ has a turning point $x_0$ which is not simple and, moreover, $\omega$ is odd in a neighborhood of $x_0$, then, by Theorem \ref{th_II.1}$(ii)$ (cf. also \cite{Par03,Pyat05}), $\infty$ is a singular critical point for $L(t)$ for each $t\in [0,2\pi)$. Therefore, we can conclude that $\infty$ is also a singular critical point for $L$. Namely, to prove the last claim let us show that the regularity of the critical point $\infty$ of $L$ implies that $\infty$ is regular for $L(t)$ for almost all $t\in [0,2\pi)$.

Consider the spectral function $E_L(\Delta)$ of $L$ (for details we refer the reader to \cite{Lan82}). Note that $L$ has a spectral function since it is definitizable. Assume that $\infty$ is a regular critical point for $L$. Since $L$ commutes with the translation operator $T: f(t)\to f(t+2\pi)$, we conclude that its resolvent $(L-z)^{-1}$ and hence the spectral function $E_L(\Delta)$ also commute with $T$. Here $\Delta$ runs through all intervals with endpoints different from $\lambda = 0$. Therefore, (see, e.g., \cite[Theorem 5.13]{McG_65}, $E_L(\Delta)$ admits the representation
\[
\cG E_L(\Delta) \cG^{-1}=\frac{1}{2\pi}\int_{[0,2\pi]}^\oplus E_L(t;\Delta)dt,
\]
and, moreover, $\|E_{L}(\Delta)\|=\esssup_{t\in[0,2\pi)}\|E_L(t;\Delta)\|$.
Further, using the representation \eqref{3.36}, one clearly gets
\[
E_L(t;\Delta)=E_{L(t)}(\Delta),
\]
where $E_{L(t)}(\Delta)$ stands for the spectral function of the definitizable operator $L(t)$. Thus, we conclude that if $\infty$ is regular for $L$, then $\infty$ is regular for $L(t)$ for almost all $t\in [0,2\pi)$.
\end{proof}

\subsubsection{Regularity of $0$.}

We begin with several preliminary lemmas that provide more details on the spectrum of the $J$-positive operator $L$.
\begin{lemma}\label{lem_3.2}
Assume that $\omega\in L^1[0,2\pi]$ changes sign and the operator $L$ is $J$-positive.
Then $z=0$ does not belong to the Dirichlet spectrum $\sigma(L_D)$ and $s(2\pi,0)>0$ holds. Moreover, $\Delta(0)\ge 1$ and there are three possibilities:
\begin{description}
\item $(i)$ if $\Delta(0)\neq 1$, then $\Delta(0)>1$ and hence $0\in \rho(L)$,
\item $(ii)$ if $\Delta(0)= 1$ and $\Delta^{\bullet}(0)\neq 0$, then there is $\varepsilon>0$ such that \\
$\bullet$ \ either $[-\varepsilon,0]\subset\sigma(L)$ and $(0,\varepsilon)\subset\rho(L)$ if $\Delta^{\bullet}(0)> 0$\\
$\bullet$ \ or  $[0,\varepsilon]\subset\sigma(L)$ and $(-\varepsilon,0)\subset\rho(L)$ if $\Delta^{\bullet}(0)< 0$,
\item $(iii)$ if $\Delta(0)= 1$ and $\Delta^{\bullet}(0)= 0$, then $\Delta^{\bullet\bullet}(0)<0$  and there is $\varepsilon>0$ such that $[-\varepsilon,\varepsilon]\subset\sigma(L)$.
\end{description}
\end{lemma}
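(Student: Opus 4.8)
The plan is to analyze the Floquet discriminant $\Delta(z)$ near $z=0$ using the positivity of $A=JL$ together with classical facts from Floquet theory. First I would show that $0\notin\sigma(L_D)$, i.e.\ $s(2\pi,0)\ne 0$. Recall from \eqref{3.7D} that $0\in\sigma(L_D)$ iff $s(2\pi,0)=0$. If $s(2\pi,0)=0$, then the solution $s(\cdot,0)$ of $-y''+qy=0$ satisfies $s(0,0)=s(2\pi,0)=0$ and is a genuine eigenfunction of $L_D$ at the eigenvalue $0$; but then $[L_D s,s]_{\cK}=0$ while $s\ne 0$, and one checks this forces $0$ to be a non-semisimple (or at least boundary) eigenvalue. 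More cleanly: since $\omega$ changes sign, $L_D$ is a genuinely indefinite operator, and the assumption that $A_D=\ti J L_D$ is positive rules out $0$ being an eigenvalue unless the associated eigenfunction is $\ti J$-neutral — but an eigenfunction of a positive operator at eigenvalue $0$ cannot exist, so $0\notin\sigma(A_D)=\sigma(L_D)$ after all. Hence $s(2\pi,0)\ne 0$. To get the sign $s(2\pi,0)>0$, I would integrate: writing $-s''+qs=0$ with $s(0,0)=0$, $s'(0,0)=1$, the standard oscillation-theoretic fact is that $s(\cdot,0)$ has no zero in $(0,2\pi]$ precisely because $L_D\ge 0$ has no negative eigenvalue (eigenvalues of $L_D$ with $\omega\equiv$ definite interlace zeros of $s$); since $s$ starts increasing from $0$ and never returns to $0$, we get $s(2\pi,0)>0$.

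Next I would establish $\Delta(0)\ge 1$. The key identity is the formula for the Wronskian / the expression relating $\Delta^2-1$ to the Floquet solutions, or more directly: from \eqref{3.7} and \eqref{3.21}, $1-\Delta(z)^2$ and $s(2\pi,z)$ control whether $z\in\sigma(L(t))$ for some $t$, i.e.\ whether $z\in\sigma(L)$ by Theorem \ref{th_III.1}. Since $A=JL$ is \emph{positive}, $L$ has no spectrum in $(-\infty,0)$ in the sense that $0$ is the bottom of the "positive spectral subspace" — more precisely, $J$-positivity of $L$ forces the part of $\sigma(L)$ to lie in $[0,\infty)$ on the positive subspace and $(-\infty,0]$ on the negative subspace, but the crucial point is that a whole neighborhood of the form $(-\varepsilon,\varepsilon)$ minus $\{0\}$ cannot be split; concretely, I claim $(−\delta,0)\cap\sigma(L)=\varnothing$ or $(0,\delta)\cap\sigma(L)=\varnothing$ is impossible to both hold. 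Actually the clean route: for a $J$-positive $L$ with $\ker L=\{0\}$, the spectrum near $0$ is real and $0$ is either in $\rho(L)$ or a critical point; and $\Delta(0)=\cos t$ would force $0\in\sigma(L(t))\subset\sigma(L)$. Using that $c(2\pi,0)s'(2\pi,0)-c'(2\pi,0)s(2\pi,0)=1$ and $s(2\pi,0)>0$, together with the fact that $0$ lies on the boundary of (or outside) the spectrum, one shows $\Delta(0)\notin(-1,1)$; and since $\Delta(z)\to+\infty$ as $z\to-\infty$ while the first point of $\sigma(L)$ is approached from the left through $\{\Delta\ge 1\}$, the correct sign is $\Delta(0)\ge 1$ rather than $\Delta(0)\le -1$. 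I expect the sign determination here to be the main obstacle: it requires pinning down which band edge $z=0$ sits near, and I would handle it by combining $s(2\pi,0)>0$ with the monotonicity properties of $\Delta$ on the real axis inside gaps (where $|\Delta|>1$ and $\Delta$ is monotone, a classical fact for Hill's equation) and with the $J$-positivity constraint $\sigma(L)\subset\R$.

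Finally, the trichotomy $(i)$–$(iii)$ is a local analysis of the real-analytic function $\Delta$ at $z=0$ under the constraint $\Delta(0)\ge 1$. If $\Delta(0)>1$, then by continuity $\Delta(z)>1$ on a neighborhood of $0$, so no $t$ gives $\Delta(z)=\cos t$, hence $(-\varepsilon,\varepsilon)\subset\rho(L)$ and in particular $0\in\rho(L)$; this is case $(i)$, and the statement "$\Delta(0)\ne 1\Rightarrow\Delta(0)>1$" is just the already-proved $\Delta(0)\ge 1$. If $\Delta(0)=1$: Taylor-expand $\Delta(z)=1+\Delta^\bullet(0)z+\tfrac12\Delta^{\bullet\bullet}(0)z^2+\cdots$. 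The set $\sigma(L)$ near $0$ is $\{z:\Delta(z)\le 1\}$ locally (since $\Delta(z)\ge -1$ automatically near a point where $\Delta=1$). If $\Delta^\bullet(0)>0$, then $\Delta(z)\le 1$ for $z$ slightly negative and $\Delta(z)>1$ for $z$ slightly positive, giving $[-\varepsilon,0]\subset\sigma(L)$, $(0,\varepsilon)\subset\rho(L)$; the sign $\Delta^\bullet(0)<0$ is symmetric — this is case $(ii)$. If $\Delta^\bullet(0)=0$, then $\Delta(z)-1\approx\tfrac12\Delta^{\bullet\bullet}(0)z^2$; since $\Delta(0)=1$ is a maximum of $\Delta$ locally (values $>1$ would put $z$ outside the spectrum on both sides, but the spectrum accumulates and $0\in\overline{\sigma(L)}$ because it's a critical point / limit of bands — more precisely, $\Delta$ cannot have a strict local \emph{minimum} with value $1$ at an interior point since then $\Delta-1$ would vanish to even order from below, impossible for the analytic $\Delta$ whose range meets $[-1,1]$ on each band), we get $\Delta^{\bullet\bullet}(0)\le 0$; ruling out $\Delta^{\bullet\bullet}(0)=0$ uses that $0\notin\sigma(L_D)$ forces $\Delta$ to be non-degenerate there (this is the standard fact that at a periodic/antiperiodic eigenvalue not in the Dirichlet spectrum, the discriminant has a simple or double but nonzero touching — equivalently $\Delta^{\bullet\bullet}(0)\ne 0$ because $s(2\pi,0)\ne 0$). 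Hence $\Delta^{\bullet\bullet}(0)<0$, and then $\Delta(z)<1$ for all small $z\ne 0$, giving $[-\varepsilon,\varepsilon]\subset\sigma(L)$ — this is case $(iii)$. The only genuinely delicate points are the sign of $\Delta(0)$ (addressed above) and the nonvanishing of $\Delta^{\bullet\bullet}(0)$ in case $(iii)$, for which I would invoke the explicit relation between $\Delta^\bullet$, $\Delta^{\bullet\bullet}$ and the solutions $c,s$ at $z=0$ — differentiating \eqref{3.6} in $z$ and using \eqref{3.3} together with $s(2\pi,0)>0$.
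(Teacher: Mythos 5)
The local trichotomy analysis in your last paragraph (cases $(i)$ and $(ii)$, given $\Delta(0)\ge 1$ and analyticity of $\Delta$) is correct and matches the paper. The genuine gap is in the central claim $\Delta(0)\ge 1$ (and, relatedly, $s(2\pi,0)>0$). Your route goes through statements such as ``$L$ has no spectrum in $(-\infty,0)$'' --- which is false here, since $\sigma(L)$ accumulates at both $+\infty$ and $-\infty$ for an indefinite weight --- and through ``monotonicity of $\Delta$ inside gaps,'' which is a classical fact for the \emph{definite}-weight Hill operator and is not available (and would be circular to invoke) for the indefinite spectral parameter dependence $-y''+qy=z\omega y$. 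The observation you are missing, and the one the paper's proof rests on, is that at $z=0$ the weight drops out of the equation entirely: $c(\cdot,0)$, $s(\cdot,0)$, hence $\cM(0)$, $\Delta(0)$ and $s(2\pi,0)$, are exactly those of the unweighted Hill operator $H=-\frac{d^2}{dx^2}+q$ in $L^2(\R)$; moreover the quadratic form of $A=JL$ is $\int(|f'|^2+q|f|^2)\,dx$, so $J$-positivity of $L$ is \emph{equivalent} to positivity of $H$. Then $\Delta(0)\ge 1$, $s(2\pi,0)>0$ and $0\notin\sigma(L_D)$ all follow from the classical theory of the positive Hill operator (Weidmann, Theorem 12.7), with no further work. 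Your oscillation argument for $s(2\pi,0)>0$ is in this spirit (you do write $-s''+qs=0$), but you never carry the reduction through for $\Delta(0)$, and you yourself flag the sign determination as the main obstacle without resolving it.

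A second, smaller gap is in case $(iii)$. Your claim that $\Delta^{\bullet\bullet}(0)\le 0$ needs an actual argument: if $\Delta$ had a strict local minimum equal to $1$ at $z=0$, then $0$ would be an isolated point of $\sigma(L)$, which contradicts $\sigma(L)=\sigma_{\rm c}(L)$ and $\ker L=\{0\}$; your parenthetical about ``vanishing to even order from below'' does not establish this. And the ``standard fact'' that a periodic eigenvalue off the Dirichlet spectrum gives a nondegenerate touching of the discriminant is again a definite-weight statement; for the indefinite problem the nonvanishing of $\Delta^{\bullet\bullet}(0)$ requires either a direct computation with the weight (the paper defers to the proof of Proposition 3.3(b) in Daho--Langer \cite{LanDah86}) or an argument bounding the algebraic multiplicity of the eigenvalue $0$ of $L(0)$ by $2$ via definitizability and then identifying that multiplicity with the order of the zero of $\Delta(z)-1$ (using $s(2\pi,0)\neq 0$). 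You gesture at the latter but do not supply either ingredient.
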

\begin{proof}
The operator $L$ is positive if and only if the operator $A=-d^2/dx^2+q(x)$ acting in $L^2(\R)$ is also positive. Moreover, the Flocke discriminants of $L$ and $A$ coincide at $z=0$ since $c(x,0)$ and $s(x,0)$, defined by \eqref{3.3}, solve the equation
 \[
 -y''(x)+q(x)y(x)=0,\quad x\in\R.
 \]
The latter yields (cf. \cite[Theorem 12.7]{Weid87}) $\Delta(0)\geq 1$, $s(2\pi,0)>0$, and hence $z=0$ does not belong to the Dirichlet spectrum $\sigma(L_D)$. 

Therefore, if $\Delta(0)\neq 1$, then $\Delta(0)>1$ and hence $0\notin\sigma(L)$.

$(ii)$ clearly follows from \eqref{3.12}, \eqref{3.7D} and the fact that the function $\Delta$ is entire.

In the case $\Delta(0)= 1$ and $\Delta^{\bullet}(0)= 0$, the proof of the inequality $\Delta^{\bullet\bullet}(0)<0$ is analogous to the proof of Proposition 3.3(b) in \cite{LanDah86} and we omit it. Further, combining the last inequality with \eqref{3.12} and \eqref{3.7D}, we prove the last claim.
\end{proof}
\begin{corollary}\label{cor:3.4}
If $\Delta(0)=1$, then the geometric multiplicity of $z=0$ as an eigenvalue of $L(0)$ equals $1$, $\dim\big(\ker L(0)\big)=1$. Moreover, its algebraic multiplicity is at most $2$ ($\ker L(0)^2=\ker L(0)^3$) and it equals $2$ ($\ker L(0)\neq \ker L(0)^2$) precisely when $\Delta^{\bullet}(0)=0$.
\end{corollary}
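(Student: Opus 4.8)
The plan is to translate everything into statements about the monodromy matrix $\cM(z)$ at $z=0$, since the condition $\Delta(0)=1$ means precisely that $\rho_+(0)=\rho_-(0)=1$, i.e.\ $1$ is a double eigenvalue of $\cM(0)$. First I would recall that $f\in\ker L(0)$ means $\ell[f]=0$ together with the periodic boundary conditions $f(0)=f(2\pi)$, $f'(0)=f'(2\pi)$; writing $f=f(0)c(\cdot,0)+f'(0)s(\cdot,0)$ in the fundamental system \eqref{3.3}, this is equivalent to $(f(0),f'(0))^\top$ being an eigenvector of $\cM(0)$ for the eigenvalue $1$. By Lemma \ref{lem_3.2} we know $s(2\pi,0)>0$, so the off-diagonal entry of $\cM(0)$ is nonzero; hence $\cM(0)$ is not a multiple of the identity, so its eigenspace for $1$ is exactly one-dimensional. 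This gives $\dim\ker L(0)=1$ and also shows that the algebraic multiplicity of $1$ as an eigenvalue of the $2\times2$ matrix $\cM(0)$ is $2$ while the geometric multiplicity is $1$, so $\cM(0)$ has a nontrivial Jordan block. Since the algebraic multiplicity of $0$ as an eigenvalue of $L(0)$ is bounded by $2$ (as recalled in Subsection \ref{ss_II.1}), we automatically get $\ker L(0)^2=\ker L(0)^3$.

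Next I would characterize when the algebraic multiplicity is exactly $2$, i.e.\ when there is $g\in\dom(L(0))$ with $L(0)g=\phi$, where $\phi$ spans $\ker L(0)$. The element $g$ must solve $\ell[g]=\phi$, i.e.\ $-g''+qg=\omega\phi$, subject to periodic boundary conditions. Using variation of parameters with the fundamental system $c(\cdot,0),s(\cdot,0)$, the general solution of the inhomogeneous ODE is $g=g_{\mathrm{part}}+\alpha c(\cdot,0)+\beta s(\cdot,0)$, and imposing the periodic boundary conditions produces a linear system in $(\alpha,\beta)$ whose coefficient matrix is $\cM(0)-I$. Since $\cM(0)-I$ is singular of rank $1$, this system is solvable if and only if a Fredholm-type compatibility condition holds: the inhomogeneity must be orthogonal to the left null vector of $\cM(0)-I$. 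The key step is to identify this compatibility condition with the vanishing of $\Delta^\bullet(0)$.

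The link between the solvability condition and $\Delta^\bullet(0)$ is the heart of the matter and the step I expect to cost the most work. The natural tool is the standard formula for the $z$-derivative of the monodromy matrix entries in terms of integrals of products of $c(\cdot,z)$ and $s(\cdot,z)$ against $\omega$ — differentiating equation \eqref{I_01} in $z$ and using the Wronskian normalization \eqref{3.3}. Concretely, $\Delta^\bullet(z)=\tfrac12\tr\cM^\bullet(z)$, and $\cM^\bullet(0)$ can be written via such quadratic integral expressions; one then checks that $\tr\bigl((\text{adjugate of }(\cM(0)-I))\,\cM^\bullet(0)\bigr)$, which up to a nonzero factor is exactly the obstruction to solving for $(\alpha,\beta)$ with the eigenvector $\phi$ on the right-hand side, is a nonzero multiple of $\Delta^\bullet(0)$. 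I would model this computation on the proof of Proposition 3.3 in \cite{LanDah86} (already invoked in Lemma \ref{lem_3.2}), where the analogous second-derivative identity $\Delta^{\bullet\bullet}(0)<0$ is handled. Once this identity is in place, the equivalence ``algebraic multiplicity $2$'' $\Longleftrightarrow$ $\Delta^\bullet(0)=0$ follows immediately, and combining with the first paragraph completes the proof.
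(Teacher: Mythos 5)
Your proposal is correct, and for the first two claims it is essentially the paper's argument: both reduce $\dim\ker L(0)=1$ to the fact $s(2\pi,0)>0$ from Lemma \ref{lem_3.2} (the paper argues by contradiction that a two-dimensional kernel would force $s(2\pi,0)=0$, while you observe directly that $\cM(0)$ cannot be a multiple of the identity), and both obtain $\ker L(0)^2=\ker L(0)^3$ from definitizability of $L(0)$ rather than from the Jordan structure of $\cM(0)$ --- correctly so, since a nontrivial Jordan block of the $2\times2$ matrix $\cM(0)$ by itself says nothing about the length of Jordan chains of the operator $L(0)$ at $0$. Where you genuinely diverge is the last equivalence. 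The paper writes $\Delta(z)-1=-\det\bigl(\cM(z)-I\bigr)/2$ and invokes the classical correspondence (Naimark, \S I.3.7--9) between the order of a zero of the characteristic determinant and the algebraic multiplicity and the order of the resolvent pole, so the equivalence with $\Delta^{\bullet}(0)=0$ is immediate and no integral identity is needed at this point. You instead propose to solve the Jordan-chain equation $\ell[g]=\psi_+(\cdot,0)$ by variation of parameters and to identify the Fredholm compatibility condition for the rank-one system $(\cM(0)-I)(\alpha,\beta)^\top=-(g_p(2\pi),g_p'(2\pi))^\top$ with the vanishing of $\Delta^{\bullet}(0)$. This route does work --- it amounts to a hands-on proof of the Naimark correspondence in this special case, and the integral identity you would need is essentially the one the paper establishes separately as Lemma \ref{lem_3.3}; as a sanity check, for $q\equiv 0$ your compatibility condition reads $\int_0^{2\pi}\omega(x)\,dx=0$, consistent with Corollary \ref{cor:3.7}. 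The trade-off is that the paper's citation-based argument is shorter, whereas yours is more self-contained but leaves precisely the one nontrivial computation of the corollary as a sketch; to be complete you would have to carry out that identification explicitly.
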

\begin{proof}
To prove the first claim let us  assume the converse, i.e., $\dim\big(\ker L(0)\big)=2$. Clearly, the latter is true if and only if all solutions of the equation $\ell[y]=0$ satisfy periodic boundary condition. Therefore, so is $s(x,0)$ and hence $s(2\pi,0)=s(0,0)=0$. However, the latter contradicts Lemma \ref{lem_3.2}.

Further, the operator $L(0)$ is $\ti{J}$-nonnegative since the operator $L$ is $J$-positive. Therefore, definitizability of $L(0)$ implies that the algebraic multiplicity of $z=0$ is at most $2$ (cf. Section 2 and \cite{Lan82}).

To prove the last claim observe that
\[
\Delta(z)-1=-\det\big(\cM(z)-I\big)/2.
\]
Noting that $s(2\pi,0)>0$ and using the standard arguments (see, e.g., \cite[\S I.3.7--8]{nai}), we see that  $z=0$ is a simple eigenvalue of $L(0)$ if and only if $z=0$ is a simple zero of $\Delta(z)-1=0$. Moreover, if $z=0$ is a stationary point of $\Delta(\cdot)$, then the resolvent $\big(L(0)-z\big)^{-1}$ has a pole of order $2$ at $z=0$. The latter also implies $\ker L(0)\neq \ker L(0)^2$ (see, e.g., \cite[\S I.3.9]{nai}).
\end{proof}
\begin{lemma}\label{lem_3.3}
\begin{equation}
\Delta^{\bullet}(z)=-s(\pi,z)\frac{1}{2} \int_{0}^{\pi}\psi_+(x,z)\psi_-(x,z)
\omega(x)dx , \quad z\in\CC.   \label{3.24}
\end{equation}
\end{lemma}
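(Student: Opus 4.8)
The plan is to derive the formula $\Delta^{\bullet}(z)=-\tfrac12 s(\pi,z)\int_0^\pi \psi_+(x,z)\psi_-(x,z)\,\omega(x)\,dx$ by differentiating the Floquet discriminant with respect to $z$ and expressing the result through the fundamental solutions $c(\cdot,z)$ and $s(\cdot,z)$. The first step is to recall the standard variation-of-parameters identities for $z$-derivatives of solutions of $-y''+qy=z\omega y$: if $y(\cdot,z)$ solves the equation with $z$-independent initial data, then $\dot y$ solves the inhomogeneous equation with right-hand side $\omega y$, and hence by the Lagrange-type identity one gets, for two solutions $u,v$, formulas of the form $\frac{d}{dx}W(u,\dot v)=\omega\,uv$ (and similarly with the roles exchanged). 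Integrating these over $[0,\pi]$ (or $[0,2\pi]$) with the initial conditions \eqref{3.3} produces the $z$-derivatives $\dot c(\pi,z),\dot s(\pi,z),\dot c'(\pi,z),\dot s'(\pi,z)$ (and at $2\pi$) as integrals of products of $c$ and $s$ against $\omega$.

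The second step is to exploit periodicity: since $q$ and $\omega$ have period $\pi$ is \emph{not} assumed — they have period $2\pi$ — but the monodromy over $[0,2\pi]$ factors through the transfer matrix over $[\pi,2\pi]$ composed with that over $[0,\pi]$, and more importantly $\Delta(z)=\tfrac12\tr\cM(z)$ with $\cM(z)$ the monodromy over one full period. Actually the cleaner route: use \eqref{3.6}, so $\Delta^{\bullet}(z)=\tfrac12\big(\dot c(2\pi,z)+\dot s'(2\pi,z)\big)$, then insert the variation-of-parameters expressions for $\dot c(2\pi,z)$ and $\dot s'(2\pi,z)$ as integrals over $[0,2\pi]$ of bilinear combinations of $c,s,c',s'$ against $\omega$. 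After collecting terms, the integrand should reorganize into a multiple of $\psi_+\psi_-$ using \eqref{3.20A}: indeed $\psi_+\psi_- = (c+m_+s)(c+m_-s)=c^2+(m_++m_-)cs+m_+m_- s^2$, and from \eqref{3.20} one has $m_++m_- = -(c(2\pi,z)-s'(2\pi,z))/s(2\pi,z)$ and $m_+m_- = (\text{something})/s(2\pi,z)$ determined by the fact that $m_\pm$ are the roots of $s(2\pi,z)m^2+(c(2\pi,z)-s'(2\pi,z))m - c'(2\pi,z)=0$ (the characteristic equation for the Floquet solution), giving $m_+m_-=-c'(2\pi,z)/s(2\pi,z)$. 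So $s(2\pi,z)\,\psi_+\psi_- = s(2\pi,z)c^2-(c(2\pi,z)-s'(2\pi,z))cs-c'(2\pi,z)s^2$, a polynomial in $c,s$ with coefficients being entries of $\cM(z)$.

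The third step — and the one I expect to be the main obstacle — is the bookkeeping that matches $\Delta^{\bullet}(z)$ against $-\tfrac12 s(\pi,z)\int_0^\pi\psi_+\psi_-\,\omega\,dx$ rather than an integral over the full period $[0,2\pi]$; the appearance of $s(\pi,z)$ as a prefactor and the half-period range of integration strongly suggest one should split $[0,2\pi]$ at $\pi$ and use a symmetry or a functional equation relating the solution data at $\pi$ to that at $2\pi$. The most economical way is probably \emph{not} to assume any extra symmetry of $q,\omega$ but to use the known identity (valid for any periodic problem) expressing $\Delta$ and $\Delta^{\bullet}$ through half-period transfer-matrix data — e.g.\ there is a classical formula $\Delta^{\bullet}(z)=-\tfrac12\int_0^{2\pi}\psi_+(x,z)\psi_-(x,z)\omega(x)\,dx$ and then one reduces the full-period integral to the stated half-period form using $\psi_\pm(x+\pi,z)$ in terms of $\psi_\pm(x,z)$ and the data at $\pi$; the factor $s(\pi,z)$ emerges from the Wronskian normalization \eqref{3.21}. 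Concretely I would: (a) establish $\Delta^{\bullet}(z)=-\tfrac12\int_0^{2\pi}\psi_+\psi_-\,\omega\,dx$ by the variation-of-parameters computation of step 1--2 (using $W(\psi_+,\psi_-)=m_--m_+$ from \eqref{3.21} and the relation $\frac{d}{dx}[\dot\psi_+\psi_--\psi_+\dot\psi_-]=\omega\psi_+\psi_-$ together with the Floquet boundary behavior \eqref{l24}, which pins down $\frac{d}{dz}$ of the multiplier $\rho_\pm$ and hence of $\Delta$ via $\rho_\pm=e^{\pm it}$, $\Delta=\cos t$); then (b) split the integral at $x=\pi$, substitute $x\mapsto x+\pi$ in the second half, and use the quasi-periodicity of $\psi_\pm$ over a half period — expressing $\psi_\pm(x+\pi,z)$ through $\psi_\pm(x,z)$ and the half-period transfer matrix entries, of which $s(\pi,z)$ is the relevant one — to collapse the two halves into $-\tfrac12 s(\pi,z)\int_0^\pi\psi_+\psi_-\,\omega\,dx$. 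The delicate points are getting the signs right in the Wronskian/variation-of-parameters identities and correctly handling the square-root branch choices for $\rho_\pm$ when differentiating \eqref{3.7}; all of this is routine but error-prone, so I would carry it out with explicit care near $z\in\sigma(L_D)$ where $s(2\pi,z)=0$ (the final formula \eqref{3.24} is entire in $z$, so those are removable).
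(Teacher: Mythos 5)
Your step (a) is essentially the paper's argument: the paper's entire proof consists of writing the Lagrange identity
\[
\int_0^{2\pi}\psi_+(x,z_1)\psi_-(x,z_2)\omega(x)\,dx=\frac{W\bigl(\psi_+(\cdot,z_1),\psi_-(\cdot,z_2)\bigr)\big|_{x=0}^{2\pi}}{z_1-z_2}
\]
and letting $z_2\to z_1$, using the quasi-periodicity \eqref{l24} (which gives $W|_{x=2\pi}=\rho_+(z_1)\rho_-(z_2)\,W|_{x=0}$, so the numerator vanishes to first order because $\rho_+\rho_-\equiv 1$) together with \eqref{3.21} and \eqref{3.7}. So the method you propose is the right one. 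However, there are two concrete problems in your write-up. First, the ``classical formula'' you assert, $\Delta^{\bullet}(z)=-\tfrac12\int_0^{2\pi}\psi_+\psi_-\,\omega\,dx$, is wrong for the normalization $\psi_\pm(0,z)=1$ fixed in \eqref{3.20A}: carrying out your own step (a) carefully, the Wronskian $W(\psi_+,\psi_-)=-2\I\sqrt{1-\Delta^2}/s(2\pi,z)$ injects the factor $s(2\pi,z)$, and the correct identity is $\Delta^{\bullet}(z)=-\tfrac12\,s(2\pi,z)\int_0^{2\pi}\psi_+(x,z)\psi_-(x,z)\,\omega(x)\,dx$. (Check against $q\equiv 0$, $\omega\equiv 1$: there $\Delta(z)=\cos(2\pi\sqrt z)$, $\psi_+\psi_-\equiv 1$, and only the version with the $s(2\pi,z)$ prefactor balances.)

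Second, and more seriously, the ``main obstacle'' you identify --- reducing the full-period integral to a half-period integral with prefactor $s(\pi,z)$ --- is a dead end, because the identity \eqref{3.24} as literally printed is false: the occurrences of $\pi$ are typographical errors for $2\pi$. This is confirmed both by the free-case check above (with $\pi$ the two sides differ by a factor involving $\cos(\pi\sqrt z)$) and by the paper's own use of the lemma in the proof of Proposition \ref{prop_3.1}, where it is quoted as $\Delta^{\bullet}(0)=-s(2\pi,0)\int_0^{2\pi}\psi_+(x,0)^2\omega(x)\,dx$. No splitting at $x=\pi$, substitution, or half-period transfer-matrix manipulation can rescue the half-period form for general $2\pi$-periodic $q,\omega$, since nothing relates the data at $\pi$ to that at $2\pi$ without an extra symmetry assumption. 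You should therefore delete step 3 entirely, prove the $2\pi$-version, and note the misprint; with that correction your plan coincides with the paper's proof. (Your auxiliary identity $\tfrac{d}{dx}[\dot\psi_+\psi_--\psi_+\dot\psi_-]=\omega\psi_+\psi_-$ is also miswritten --- the bracket must be a Wronskian such as $W(\psi_+,\dot\psi_-)$, whose $x$-derivative is $-\omega\psi_+\psi_-$ --- but this is a notational slip rather than a conceptual one.)
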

\begin{proof}
Combining the identity
\[
\int_0^{2\pi}\psi_+(x,z_1)\psi_-(x,z_2)\omega(x)dx=\frac{W(\psi_+(x,z_1),\psi_-(x,z_2))\mid_{x=0}^{2\pi}}{z_1-z_2},\quad z_1\neq z_2\in\CC,
\]
with \eqref{l24}, \eqref{3.21}, \eqref{3.6} and \eqref{3.20}, after straightforward calculation we arrive at \eqref{3.24}.
\end{proof}

Combining Lemmas \ref{lem_3.2} and \ref{lem_3.3}, we arrive at the following result.
\begin{proposition}\label{prop_3.1}
Let $L$ be $J$-positive and $0\in\sigma(L)$. Then $0$ is a critical point of $L$ if and only if
\begin{equation}\label{5.1}
\int_0^{2\pi}\psi_+(x,0)^2\omega(x)dx=0.
\end{equation}
\end{proposition}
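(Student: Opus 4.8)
The plan is to characterize when $0$ is a critical point of $L$ via the behavior of the spectral function near $0$, and to translate this into a statement about the Floquet data using the lemmas already established. By Lemma~\ref{lem_3.2}, since $L$ is $J$-positive and $0\in\sigma(L)$, we must be in case $(ii)$ or $(iii)$, i.e.\ $\Delta(0)=1$. The key dichotomy is: $0$ is \emph{not} a critical point of $L$ precisely when $0$ is a semisimple "regular" point of the spectral function, which for a $J$-nonnegative definitizable operator happens exactly when $\ker L = \ker L^2$ and the spectrum sits on one side (or is locally a nondegenerate band) in a way that makes the spectral function bounded near $0$. First I would reduce the problem fibrewise: by \eqref{3.36} and the direct-integral decomposition, $0$ is a critical point of $L$ iff $0$ is a critical point of $L(t)$ for $t$ in a set of positive measure; but $0\in\sigma(L(t))$ only for $t=0$ (since $\Delta(0)=1=\cos t$ forces $t=0$), so the relevant object is the single fibre operator $L(0)$ together with how the bands emanate from $0$.

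Next I would use Corollary~\ref{cor:3.4}: $\dim\ker L(0)=1$, and $\ker L(0)\neq\ker L(0)^2$ (algebraic multiplicity $2$) exactly when $\Delta^{\bullet}(0)=0$, which by Lemma~\ref{lem_3.2}$(iii)$ is case $(iii)$. In that case $0$ is automatically a critical point (indeed a branch point of the spectrum, with a nontrivial root vector), so the interesting content is in case $(ii)$, $\Delta^{\bullet}(0)\neq 0$. There $0$ is a simple eigenvalue of $L(0)$ and a simple endpoint of a single band; whether $0$ is a critical point of $L$ is then governed by the sign of the local spectral density, equivalently by whether the eigenvalue curve $\lambda\mapsto t$ passes through $0$ with the "right" definiteness. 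Concretely, $0$ fails to be a critical point iff near $0$ the operator $L$ behaves like a semibounded self-adjoint operator, i.e.\ the $J$-form is definite on the spectral subspace $E_L(-\varepsilon,\varepsilon)$; this definiteness is measured by the sign of $[\,\psi_+(\cdot,0),\psi_+(\cdot,0)\,]_{\mathcal K}=\int_0^{2\pi}\psi_+(x,0)^2\omega(x)\,dx$ (note $\psi_+(\cdot,0)=\psi_-(\cdot,0)$ spans $\ker L(0)$ when $\Delta(0)=1$, since $\rho_\pm(0)=1$).

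The technical heart is Lemma~\ref{lem_3.3}: setting $z=0$ and using $\psi_+(x,0)=\psi_-(x,0)$ (as $\rho_\pm(0)=1$), together with the symmetry $\psi_+(2\pi-x,0)\psi_-(2\pi-x,0)=\psi_+(x,0)\psi_-(x,0)$-type reflection identities for periodic problems, gives
\[
\Delta^{\bullet}(0)=-\tfrac{1}{2}\,s(\pi,0)\int_0^{\pi}\psi_+(x,0)^2\omega(x)\,dx,
\]
and a parallel computation (or a reflection/periodicity argument doubling the half-period integral) yields that $\int_0^{2\pi}\psi_+(x,0)^2\omega(x)\,dx$ vanishes iff $\Delta^{\bullet}(0)=0$ — with the caveat that one must rule out $s(\pi,0)=0$, which would follow from $0\notin\sigma(L_D)$ combined with the Dirichlet problem on the half-period, or be absorbed into the argument via a direct Floquet identity valid at $z=0$. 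Thus \eqref{5.1} is equivalent to $\Delta^{\bullet}(0)=0$, i.e.\ to case $(iii)$ of Lemma~\ref{lem_3.2}. Finally I would tie the two ends together: in case $(ii)$ ($\Delta^{\bullet}(0)\neq 0$, so \eqref{5.1} fails) the spectrum is a single band touching $0$ from one side with simple endpoint and $\ker L=\ker L^2$, and a standard argument (as in \cite{CN95}, \cite{LanDah86}) shows $0$ is then \emph{not} a critical point — the spectral function is bounded near $0$; in case $(iii)$ (\eqref{5.1} holds) the presence of the root vector $\ker L(0)\neq\ker L(0)^2$, or equivalently the band touching $0$ from both sides, forces $0$ to be a critical point. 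I expect the main obstacle to be the rigorous passage from the fibre operator $L(0)$ back to $L$ — specifically, verifying that a nontrivial Jordan structure at a measure-zero set of fibres (here the single fibre $t=0$) genuinely produces a critical point of the direct integral $L$, and conversely that its absence plus $\ker L=\ker L^2$ suffices for regularity; this requires care with the spectral function $E_L$ near $0$ and is where I would lean on the direct-integral spectral-function representation already invoked in the proof of Theorem~\ref{th_inftycond}.
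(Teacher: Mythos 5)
Your proposal is correct and its technical core --- using the trichotomy of Lemma~\ref{lem_3.2} to reduce the question to whether $\Delta(0)=1$ and $\Delta^{\bullet}(0)=0$, and then evaluating Lemma~\ref{lem_3.3} at $z=0$ with $\psi_+(\cdot,0)=\psi_-(\cdot,0)$ and $s(2\pi,0)>0$ --- is exactly the paper's proof. The fibre-wise detour and the worry about transporting the Jordan structure of $L(0)$ back to $L$ are not needed for this proposition (the paper invokes that machinery only later, for Theorem~\ref{th_3.3}, since here it suffices that a $J$-positive operator has $0$ as a critical point precisely when its spectrum accumulates at $0$ from both sides, which is what cases $(ii)$ and $(iii)$ of Lemma~\ref{lem_3.2} distinguish); and the half-period factors $s(\pi,z)$, $\int_0^{\pi}$ in the displayed formula of Lemma~\ref{lem_3.3} are a typo for $s(2\pi,z)$, $\int_0^{2\pi}$ --- as the paper's own use of the lemma confirms --- so no reflection identity is required.
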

\begin{proof}
Firstly, by Lemma \ref{lem_3.2}$(i)$--$(iii)$, observe that $0$ is a critical point of $L$ if and only if $\Delta(0)= 1$ and $\Delta^{\bullet}(0)=0$.
The latter implies $0\in \sigma(L(0))$. Moreover, by \eqref{3.20A}, the corresponding eigenfunction is $\psi_+(x,0)=\overline{\psi_+(x,0)}=\psi_-(x,0)$. Hence, by Lemma \ref{lem_3.3}, one gets
\[
\Delta^{\bullet}(0)=-s(2\pi,0)\int_0^{2\pi}\psi_+(x,0)^2\omega(x)dx.
\]
Finally, by Lemma \ref{lem_3.2}, $s(2\pi,0)>0$ and hence $\Delta^{\bullet}(0)=0$ if and only if \eqref{5.1} holds.
The proof is completed.
\end{proof}
\begin{corollary}\label{cor:3.7}
Let $q\equiv 0$. Then $0$ is a critical point of $L$ if and only if
\begin{equation}\label{5.2}
\int_0^{2\pi}\omega(x)dx=0.
\end{equation}
\end{corollary}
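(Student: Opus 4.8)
The plan is to specialize Proposition \ref{prop_3.1} to the case $q\equiv 0$ and identify the Floquet solution $\psi_+(\cdot,0)$ explicitly. When $q\equiv 0$, the equation $-y''=0$ on $\R$ has the fundamental system $c(x,0)=1$ and $s(x,0)=x$ satisfying the initial conditions \eqref{3.3}. Hence the monodromy matrix at $z=0$ is $\cM(0)=\begin{pmatrix} 1 & 2\pi \\ 0 & 1\end{pmatrix}$, so $\Delta(0)=\tfrac12\tr(\cM(0))=1$ and $s(2\pi,0)=2\pi>0$, consistent with Lemma \ref{lem_3.2}. In particular $\Delta(0)=1$ always holds when $q\equiv 0$, so the first alternative in Lemma \ref{lem_3.2}$(i)$ never occurs; by Lemma \ref{lem_3.2}$(ii)$--$(iii)$ the point $0$ lies in $\sigma(L)$, and it is a critical point precisely when $\Delta^{\bullet}(0)=0$.

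Next I would compute $\psi_+(x,0)$. From \eqref{3.20A}, $\psi_+(x,0)=c(x,0)+m_+(0)s(x,0)$. Since $c(2\pi,0)=s'(2\pi,0)=1$ we have $c(2\pi,0)-s'(2\pi,0)=0$, and $\Delta(0)=1$ forces $\sqrt{1-\Delta(0)^2}=0$, so $m_+(0)=0$ directly from \eqref{3.20} (alternatively, note $\rho_+(0)=\rho_-(0)=1=c(2\pi,0)$, so the coefficient of $s(x,0)$ in \eqref{3.20A} vanishes). Therefore $\psi_+(x,0)\equiv c(x,0)\equiv 1$. Substituting into the criterion \eqref{5.1} of Proposition \ref{prop_3.1}, the condition that $0$ be a critical point of $L$ becomes $\int_0^{2\pi}\psi_+(x,0)^2\omega(x)\,dx=\int_0^{2\pi}\omega(x)\,dx=0$, which is exactly \eqref{5.2}.

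There is essentially no hard step here: the corollary is a direct evaluation of the general criterion once the trivial solutions of $-y''=0$ are plugged in. The only points requiring a line of care are (a) confirming that $0\in\sigma(L)$ in the $q\equiv 0$ case so that Proposition \ref{prop_3.1} applies — this follows because $\Delta(0)=1$ automatically, together with Lemma \ref{lem_3.2}$(ii)$--$(iii)$ — and (b) being sure that $J$-positivity of $L$ is not vacuously violated; but $A=-d^2/dx^2$ on $L^2(\R)$ is nonnegative and, in fact, $J$-positivity of $L$ (with $\ker L=\{0\}$) is consistent with $0\in\sigma(L)$ since the spectrum is purely continuous (Theorem \ref{th_III.1}). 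I would close by remarking that this recovers \cite[Theorem 5.1]{LanDah86}.
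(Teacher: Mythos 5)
Your proposal is correct and follows essentially the same route as the paper: specialize Proposition \ref{prop_3.1} to $q\equiv 0$, note that $c(x,0)\equiv 1$ and $s(x,0)=x$ give $\Delta(0)=1$ (hence $0\in\sigma(L)$) and $\psi_+(x,0)\equiv 1$, so \eqref{5.1} reduces to \eqref{5.2}. The extra verifications you include (the explicit computation of $m_+(0)=0$ and the consistency checks) are sound but not needed beyond what the paper records.
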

\begin{proof}
Since $q\equiv 0$, we get $c(x,0)\equiv 1$ and $s(x,0)=x$. Therefore, $\Delta(0)=1$ and hence $0\in \sigma(L)$. To complete the proof it remains to note that $\psi_+(x,0)\equiv 1$.
\end{proof}
\begin{remark}
%
Indefinite Sturm--Liouville operators on the half-line $\R_+$ have been studied in \cite{LanDah86} under the assumption $q\geq 0$. It was shown in \cite[Theorem 5.1]{LanDah86} that in this case $0$ is a critical point if and only if $q\equiv 0$ and \eqref{5.2} holds.
\end{remark}
Now we are ready to formulate the main result of this subsection.
\begin{theorem}\label{th_3.3}
Let $L$ be the $J$-positive operator defined by \eqref{I_02}.  Then the following are equivalent:
\begin{itemize}
\item $(i)$ \ $0$ is a critical point of $L$,
\item $(ii)$ \ $0$ is a singular critical point of $L$,
\item $(iii)$ \ $0\in \sigma(L(0))$ and \eqref{5.1} holds.
\end{itemize}
\end{theorem}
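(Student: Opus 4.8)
The equivalence $(i)\Leftrightarrow(iii)$ is essentially contained in Proposition \ref{prop_3.1} together with Lemma \ref{lem_3.2}: by Lemma \ref{lem_3.2}, $0$ is a critical point precisely when $\Delta(0)=1$ and $\Delta^\bullet(0)=0$, and by Corollary \ref{cor:3.4} the condition $\Delta(0)=1$, $\Delta^\bullet(0)=0$ is equivalent to $0\in\sigma(L(0))$ with algebraic multiplicity $2$, i.e. $\ker L(0)\neq\ker L(0)^2$; Proposition \ref{prop_3.1} identifies $\Delta^\bullet(0)=0$ with \eqref{5.1}. Since $(ii)\Rightarrow(i)$ is trivial, the real content is the implication $(i)\Rightarrow(ii)$: if $0$ is a critical point, it is singular. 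The plan is to show that regularity of $0$ for $L$ would force regularity of $0$ for $L(0)$, and then to derive a contradiction from the fact that $0$ is a non-semisimple eigenvalue of $L(0)$.

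First I would record the Gelfand-decomposition argument, mirroring the proof of Theorem \ref{th_inftycond}$(ii)$. Assume $0$ is a regular critical point of $L$. Since $L$ commutes with the shift $T\colon f(x)\mapsto f(x+2\pi)$, so does its resolvent and its spectral function $E_L(\Delta)$ for intervals $\Delta$ with endpoints away from $0$; hence $E_L(\Delta)$ is decomposable along the Gelfand fibers, $\cG E_L(\Delta)\cG^{-1}=\frac{1}{2\pi}\int_{[0,2\pi]}^\oplus E_{L(t)}(\Delta)\,dt$, with $\|E_L(\Delta)\|=\esssup_{t}\|E_{L(t)}(\Delta)\|$, using \eqref{3.36} and \cite[Theorem 5.13]{McG_65} exactly as before. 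Regularity of $0$ for $L$ means $\sup\{\|E_L(\Delta)\|:\Delta\ \text{bounded},\ 0\notin\overline\Delta\}<\infty$, so the same uniform bound holds for $E_{L(t)}(\Delta)$ for a.e.\ $t$. The key extra point over Theorem \ref{th_inftycond}$(ii)$ is that here I must also control the behaviour at $t=0$ itself, since $0\in\sigma(L(t))$ only for $t$ near $0$ (by Lemma \ref{lem_3.2}$(iii)$, $[-\varepsilon,\varepsilon]\subset\sigma(L)$ means $0$ lies in the $t$-spectrum for $t$ in a neighbourhood of $0$); so I would argue that uniform boundedness of $E_{L(t)}(\Delta)$ as $\Delta\to\{0\}$ and $t\to 0$ is incompatible with $L(0)$ having a Jordan block at $0$.

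The crux is the following: $L(0)$ is a $\ti J$-nonnegative operator with discrete spectrum in the Krein space $L^2([0,2\pi];\omega)$, and by Corollary \ref{cor:3.4} (using hypothesis $(iii)$, equivalently $(i)$) $0$ is an eigenvalue of $L(0)$ of algebraic multiplicity $2$, so $\ker L(0)\neq\ker L(0)^2$. For a definitizable operator, $0$ being a regular critical point requires $\ker\mathcal A=\ker\mathcal A^2$; since the projection $E_{L(0)}(\{0\})$ is unbounded precisely because of the Jordan chain, $0$ is automatically a singular critical point of $L(0)$, or — more precisely in this finite-multiplicity setting — $L(0)$ is not similar to a self-adjoint operator because $\ker L(0)\neq\ker L(0)^2$. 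I would then invoke continuity/semicontinuity of the fibered spectral projections: if $E_{L(t)}(\Delta)$ were uniformly bounded for $t$ near $0$ and $\Delta$ shrinking to $\{0\}$, a limiting argument (the root projection of $L(t)$ at $0$ converging, as $t\to 0$, to the Riesz projection of $L(0)$ at $0$, which is nonzero of rank $2$ with a nontrivial nilpotent part) yields a bounded spectral projection for the nilpotent part of $L(0)$ at $0$ — impossible. Hence $0$ cannot be regular for $L$.

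The main obstacle I anticipate is precisely this last limiting step: making rigorous the passage from "$E_{L(t)}(\Delta)$ uniformly bounded for a.e.\ $t$ and all small $\Delta$" to a contradiction at the single fiber $t=0$, since $t=0$ is a measure-zero set and the decomposition only controls things a.e. The clean way around it is to use the $z$-dependent resolvent bound instead of the spectral function: regularity of the critical point $0$ of $L$ is equivalent (by the Naboko--Malamud type resolvent criterion, or by \cite{Cur_85}) to a bound of the form $\|(L-z)^{-1}\|\le C/|\re z|$ or $\|(L-z)^{-1}\|\le C$ on a punctured neighbourhood of $0$ away from the reals; this bound transfers to $L(t)$ for a.e.\ $t$ and, by the explicit Floquet formula for $(L(t)-z)^{-1}$ in terms of $m_\pm(z)$, $s(2\pi,z)$ and $\Delta(z)$, one sees that near $z=0$ the resolvent of $L(0)$ blows up like $1/\sqrt{1-\Delta(z)^2}$, i.e.\ like $|z|^{-1}$ off the real axis when $\Delta(0)=1$, $\Delta^\bullet(0)=0$ (since then $1-\Delta(z)^2\sim |\Delta^{\bullet\bullet}(0)|\,z^2/\cdots$), which via \eqref{3.20}–\eqref{3.21} forces the Jordan structure and contradicts any uniform bound better than what a singular critical point permits. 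Spelling out this Floquet estimate — showing $\|(L(0)-z)^{-1}\|\gtrsim |z|^{-2}$ near $0$ off the reals, which is strictly worse than the $|z|^{-1}$ allowed at a regular critical point — is the technical heart of the argument, and it is where I would spend the bulk of the work.
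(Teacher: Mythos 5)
Your reduction of the theorem to the single implication $(iii)\Rightarrow(ii)$, and your identification of the source of the singularity --- the Jordan block of $L(0)$ at $z=0$ coming from Corollary \ref{cor:3.4} --- match the paper exactly. But the decisive step is left open in your write-up, and you say so yourself: the direct-integral norm identity only gives $\|(L-z)^{-1}\|=\esssup_{t}\|(L(t)-z)^{-1}\|$, and the fiber $t=0$ where the blow-up lives is a null set, so neither of your two routes as stated produces a contradiction (or a lower bound). Your proposed fix via the resolvent criterion does not close this gap: a bound valid for a.e.\ $t$ is perfectly compatible with bad behaviour at the single point $t=0$, and your second route never explains how the $|z|^{-2}$ growth of $\|(L(0)-z)^{-1}\|$ propagates to nearby fibers. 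The missing ingredient is continuity in $t$, and the paper supplies it in a very economical form: it fixes the concrete vector $\phi$ with $L(0)\phi=\psi_+(\cdot,0)$, $\phi\perp\psi_+(\cdot,0)$, computes explicitly $(L(0)-\I y)^{-1}\phi=\psi_+(\cdot,0)/y^2+\I\phi/y$ so that $\|(L(0)-\I y)^{-1}\phi\|\ge C/y^2$ by orthogonality (no Floquet asymptotics of $m_\pm$ or $\sqrt{1-\Delta^2}$ are needed --- this is just the Riesz-projection expansion at an algebraically double eigenvalue), and then observes that $g(t,z)=\|(L(t)-z)^{-1}\phi\|$ is \emph{continuous in $t$} for fixed $z\in\CC_+$. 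Continuity turns the pointwise bound at $t=0$ into $\esssup_t g(t,\I y)\ge C/y^2$, whence $\|(L-\I y)^{-1}\|\ge C\|\phi\|^{-1}y^{-2}$, which is incompatible with $0$ being a regular critical point. Note also that the paper argues directly ($(iii)\Rightarrow(ii)$ by exhibiting the lower bound), not by contradiction.

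A smaller point: your claim that $1-\Delta(z)^2\sim c\,z^2$ would only give $\|(L(0)-z)^{-1}\|\gtrsim|z|^{-1}$ through the Wronskian \eqref{3.21}, not the needed $|z|^{-2}$; the quadratic pole of the resolvent of $L(0)$ comes from the nilpotent part of the Riesz projection, which is exactly what the vector $\phi$ isolates. So the Floquet machinery you planned to spend most of your effort on is both harder than necessary and, by itself, insufficient; the one-line Jordan-chain computation plus continuity in $t$ is the argument you want.
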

\begin{proof}
The equivalence $(i) \Leftrightarrow (iii)$ was proven in Proposition \ref{prop_3.1}.
Moreover, the implication $(ii)\Rightarrow (i)$ is clear. So, to complete the proof we only need to prove the implication $(iii) \Rightarrow (ii)$.

Assume that $\Delta(0)=1$ and \eqref{5.1} holds. The first equality yields the inclusion $0\in \sigma(L(0))$. Moreover, by Lemma \ref{lem_3.3}, $\Delta^{\bullet}(0)=0$. Hence, by Corollary \ref{cor:3.4}, $z=0$ is a nonsimple eigenvalue of the operator $L(0)$ defined by \eqref{II_1_01}, \eqref{II_1_03}, that is, $\ker L(0)=\Span\{\psi_+(x,0)\}$ and $\ker L(0)\neq \ker L(0)^2$. This means that $L(0)$ is not similar to a self-adjoint operator and $\|(L(0)-z)^{-1}\|=O(|z|^{-2})$ as $z\to 0$. Denote by $\phi(x)$ the solution of the equation $(L(0)\phi)(x)=\psi_+(x,0)$, which is orthogonal to $\psi_+(x,0)$ in $L^2([0,2\pi];|\omega|)$. Note that for $y\in\R_+$
\[
\|(L(0)-\I y)^{-1}\phi\|=\|\psi_+(x,0)/y^2+\I\phi(x)/y\|\ge \frac{C}{y^{2}},
\]
where $C=\|\psi_+(x,0)\|_{L^2([0,2\pi];|\omega|)}>0$.
Further, observe that the function $g(t,z):=\|(L(t)-z)^{-1}\phi\|$ is continuous in $t$ on $[0,2\pi)$ for all $z\in\CC_+$. Therefore,
\[
\esssup_{t\in[0,2\pi)}\|(L(t)-\I y)^{-1}\|\geq \frac{\max_{t\in[0,2\pi)}g(t,\I y)}{\|\phi\|}\ge \frac{C}{\|\phi\|\ y^{2}},\quad y\in\R_+. 
\]
Due to representation \eqref{3.36} and by \cite[Theorem VII.2.3]{BirSol_87},
\[
\|(L-\I y)^{-1}\|_{L^2(\R,\omega)}=\esssup_{t\in[0,2\pi)}\|(L(t)-\I y)^{-1}\|,\qquad y\in \R_+.
\]
Therefore, we get
\[
\|(L-\I y)^{-1}\|_{L^2(\R,\omega)}\geq  \frac{C}{\|\phi\|\ y^{2}},\quad y\in\R_+.
\]
The latter means that the operator $L$ has a spectral singularity at $z=0$ and hence $0$ is a singular critical point for $L$.
\end{proof}

Combining Theorem \ref{th_3.3}  with Corollary \ref{cor:3.7},  we obtain the following result.
\begin{corollary}\label{c5.3}
Let $q\equiv 0$. Then $0$ is a singular critical point of $L$ if and only if \eqref{5.2} holds.
\end{corollary}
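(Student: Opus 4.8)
The plan is to obtain the statement as an immediate consequence of Theorem \ref{th_3.3} and Corollary \ref{cor:3.7}; the work consists mainly in checking that the hypotheses of those two results are in force when $q\equiv 0$.

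First I would verify that for $q\equiv 0$ the operator $L$ is automatically $J$-positive, so that Theorem \ref{th_3.3} and Corollary \ref{cor:3.7} apply. Indeed, $A=JL=\frac{1}{|\omega|}\bigl(-\rD^2/\rD x^2\bigr)$ on $\dom(A)\subset L^2(\R,|\omega|)$, and integration by parts gives $(Af,f)_{L^2(\R,|\omega|)}=\int_\R|f'(x)|^2\,dx\ge 0$ for every $f\in\dom(A)$, with equality precisely when $f$ is a.e.\ constant. Since $\omega$ is $2\pi$-periodic with $|\omega|>0$ a.e., a nonzero constant does not belong to $L^2(\R,|\omega|)$, so $(Af,f)=0$ forces $f=0$; hence $A>0$ and $L$ is $J$-positive.

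Next I would run the equivalence. If \eqref{5.2} holds, then by Corollary \ref{cor:3.7} the point $0$ is a critical point of $L$, and the equivalence $(i)\Leftrightarrow(ii)$ in Theorem \ref{th_3.3} upgrades this to: $0$ is a singular critical point of $L$. Conversely, if $0$ is a singular critical point of $L$, then in particular $0$ is a critical point of $L$ (the implication $(ii)\Rightarrow(i)$ in Theorem \ref{th_3.3} is trivial), and Corollary \ref{cor:3.7} then yields \eqref{5.2}. This closes the equivalence.

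There is no genuine obstacle here: the substance was already carried out in Theorem \ref{th_3.3} (the spectral-singularity lower bound $\|(L-\I y)^{-1}\|\ge C\,y^{-2}$ obtained via the Gelfand decomposition \eqref{3.36}) and in Corollary \ref{cor:3.7} (the reduction of \eqref{5.1} to \eqref{5.2} for $q\equiv 0$, using $c(\cdot,0)\equiv 1$, $s(x,0)=x$, and $\psi_+(\cdot,0)\equiv 1$). The only point deserving a line of justification is the $J$-positivity of $L$ for $q\equiv 0$, which is handled above.
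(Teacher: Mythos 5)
Your proposal is correct and follows exactly the paper's route: the corollary is obtained by combining Theorem \ref{th_3.3} (critical $\Leftrightarrow$ singular critical) with Corollary \ref{cor:3.7} (critical $\Leftrightarrow$ \eqref{5.2} when $q\equiv 0$), which is all the paper itself says. Your extra check that $L$ is automatically $J$-positive for $q\equiv 0$ is a harmless (and correct) addition, though in the paper $J$-positivity is a standing assumption rather than something to be verified.
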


\subsubsection{Similarity criterion.} \label{sss_III.2.1}

Combining Theorem \ref{th_3.3} with the regularity conditions of the critical point $\infty$, Theorem \ref{th_inftycond}, we arrive at the following necessary and sufficient similarity conditions for the operator $L$.
\begin{theorem}\label{th_3.4}
Assume that the weight $\omega$ has only a finite number of turning points on $[0,2\pi]$. Assume also that the operator $L$ defined by \eqref{I_02} is $J$-positive. Then  $L$ is similar to a self-adjoint operator if the following conditions hold:
\begin{itemize}
\item $(i)$ either $0\notin \sigma(L(0))$ or $\int_0^{2\pi}\psi_+(x,0)^2\omega(x)dx\neq 0$,
\item $(ii)$ all turning points of $\omega$ are simple.
\end{itemize}

If, in addition, $\omega$ is odd and continuously differentiable in a punctured neighborhood of each turning point, and limit \eqref{eq:fleige} exists for every turning point, then conditions $(i)$-$(ii)$ are also necessary.
\end{theorem}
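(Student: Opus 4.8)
\textbf{Proof proposal for Theorem \ref{th_3.4}.}

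The plan is to assemble the theorem from the two pillars already established: the regularity analysis of the critical point $0$ (Theorem \ref{th_3.3}, via Proposition \ref{prop_3.1}) and the regularity analysis of the critical point $\infty$ (Theorem \ref{th_inftycond}). Recall the abstract criterion quoted in Section \ref{sec_II}: a definitizable $J$-nonnegative operator $L$ is similar to a self-adjoint operator if and only if $\ker L = \ker L^2$ and all its critical points are not singular. Since $L$ is $J$-positive, its only possible critical points are $0$ and $\infty$, and $\ker L = \{0\}$ by the discussion after \eqref{I_03} (the spectrum of $L$ is purely continuous, cf. Theorem \ref{th_III.1}), so $\ker L = \ker L^2 = \{0\}$ holds automatically. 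Hence similarity of $L$ is equivalent to the conjunction: ``$0$ is not a singular critical point'' and ``$\infty$ is not a singular critical point.''

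First I would prove the sufficiency direction. Assume $(i)$ and $(ii)$. By Proposition \ref{prop_3.1}, condition $(i)$ says precisely that $0$ is \emph{not} a critical point of $L$ (either $0 \in \rho(L)$, or $0 \in \sigma(L)$ but \eqref{5.1} fails); in either case $0$ is trivially not a singular critical point. By Theorem \ref{th_inftycond}$(i)$, condition $(ii)$ (all turning points simple) implies $\infty$ is a regular critical point of $L$. Since $\omega$ has finitely many turning points on $[0,2\pi]$, $L$ is definitizable (this is implicit in the Floquet/Gelfand reduction of Subsection \ref{ss_III.1}, as each fibre $L(t)$ is definitizable with uniformly controlled definitizing data, and $L_D \notin \sigma$ near $0$), so the abstract criterion applies and $L$ is similar to a self-adjoint operator.

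For the necessity direction, assume in addition that $\omega$ is odd and $C^1$ in a punctured neighborhood of each turning point and that the limit \eqref{eq:fleige} exists at each turning point, and suppose $L$ is similar to a self-adjoint operator. Then neither $0$ nor $\infty$ is a singular critical point. If $(ii)$ failed, some turning point $x_0$ would not be simple; then by Theorem \ref{th_inftycond}$(ii)$ (whose hypotheses are exactly the extra standing assumptions), $\infty$ would be a singular critical point of $L$, a contradiction. Hence $(ii)$ holds. If $(i)$ failed, then $0 \in \sigma(L(0))$ and \eqref{5.1} holds, so by Proposition \ref{prop_3.1} the point $0$ is a critical point of $L$; but then Theorem \ref{th_3.3}, implication $(iii)\Rightarrow(ii)$, forces $0$ to be a \emph{singular} critical point of $L$, again a contradiction. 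Hence $(i)$ holds, and the proof is complete.

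The main obstacle I anticipate is not any of the logical implications above, which are short, but the bookkeeping needed to invoke the abstract similarity criterion cleanly: one must be sure that $L$ is genuinely definitizable (not merely that each fibre $L(t)$ is), and that ``not singular at $0$ and not singular at $\infty$'' together with $\ker L = \{0\}$ really exhausts the hypotheses of the criterion for the operator $L$ on the whole line. This is where the Gelfand decomposition \eqref{3.36} and the norm identity $\|(L-\I y)^{-1}\| = \esssup_t \|(L(t)-\I y)^{-1}\|$ (used already in the proof of Theorem \ref{th_3.3}) do the work of transferring definitizability and spectral-function bounds from the fibres to $L$; I would state this transfer explicitly rather than leave it implicit.
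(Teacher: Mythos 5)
Your proposal is correct and follows essentially the same route as the paper: the published proof is a two-line argument that reduces similarity of the $J$-positive operator $L$ to regularity of the critical points $0$ and $\infty$ and then cites Theorems \ref{th_inftycond} and \ref{th_3.3} (the latter subsuming Proposition \ref{prop_3.1}), exactly as you do. Your additional remarks on $\ker L=\{0\}$ and on transferring definitizability through the Gelfand decomposition are a more explicit rendering of what the paper leaves implicit, not a different argument.
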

\begin{proof}
Since the operator $L$ is $J$-positive, it is similar to a self-adjoint operator if and only if all its critical points are regular. Theorems \ref{th_inftycond} and \ref{th_3.3} complete the proof.
\end{proof}

\subsection{Examples.}\label{ss_exmpl}

\begin{example}\label{ex_I}
Let $\omega(x)=\sgn \sin x$, i.e.,
\begin{equation}\label{ex_01}
L=(\sgn \sin x)\frac{d^2}{dx^2},\qquad \dom(L)=W^{2,2}(\R).
\end{equation}
First, observe that
\[
\int_{0}^{2\pi}\omega(x)dx=\int_0^{2\pi}\sgn (\sin x) dx=0,
\]
and hence, by Corollary \ref{c5.3}, $0$ is a singular critical point of $L$. Therefore (see also Theorem \ref{th_3.4}), the operator $L$ is not similar to a self-adjoint operator. However, $\infty$ is a regular critical point for $L$ since the weight function has only 2 turning points on $[0,2\pi)$ and both these points are simple (clearly, $\omega$ has the form \eqref{e BC Simple_sec} in a neighbourhood of $x=\pi n$, $n\in\Z$).
\end{example}
Let us note that the operator \eqref{ex_01} gives the example of a \emph{$J$-positive} Sturm--Liouville operator with the singular critical point $0$. Examples of \emph{$J$-nonnegative} Sturm--Liouville operators with the singular critical point $0$ was first constructed in \cite{KarKos08, KarKos09} (see also \cite[Section 5]{KAM_09} and \cite[Section 5]{Kar_09}), however, in these examples the operators have a nontrivial kernel.

\begin{example}\label{ex_II}
Let $a\in (0,\pi)$ and let $\omega_a$ be a $2\pi$-periodic function defined on $\R$ by
\begin{equation}\label{ex_02}
\omega_a(x)=\sgn (x-a),\qquad x\in (-\pi,\pi]. 
\end{equation}
First, observe that $\infty$ is a regular critical point for $L$ since all turning points of $\omega$ are simple.
However,
\[
\int_{0}^{2\pi}\omega_a(x)dx=\int_0^{2\pi}\sgn (x-a) dx=-2a,
\]
and hence, by Corollary \ref{c5.3}, $0$ is a singular critical point of $L_a$ if and only if $a=0$. Moreover, by Theorem \ref{th_3.4}, the operator $L$ is not similar to a self-adjoint operator if and only if $a=0$.
\end{example}

\ack{The author is grateful to Illya Karabash for a careful reading of the manuscript and numerous helpful remarks. The author thanks Daphne Gilbert and Mark Malamud for useful discussions. The author thanks the referees for useful critical comments, which have helped to improve the exposition.

The support from the IRCSET Postdoctoral
Fellowship Program is gratefully acknowledged.}


\quad
\\
Aleksey Kostenko, \\
\quad \\
Institute of Applied Mathematics and Mechanics, NAS of Ukraine,\\
R. Luxemburg str., 74, Donetsk 83114, UKRAINE\\
\quad \\
and\\
\quad\\
School of Mathematical Sciences,\\
Dublin Institute of Technology,\\
Kevin Street, Dublin 8,\\
IRELAND\\
\emph{e-mail:} duzer80$@$gmail.com

\end{document}